\defcite\MR{meinhardt_donaldson}
\defcite\GS{gottsche_perverse}
\defcite\BGS{beilinson_koszul}
\defcite\BBD{BBD}
\defcite\sai{saito_introduction}
\defcite\lep{lepotier_lectures}
\def\Vecf{\opn{Vec}}
\def\vir{{\rm vir}}
\def\nil{{\rm nil}}
\def\mot{\mathfrak{m}}
\opr\ICV{\mathbf{IC}}
\def\aDT{\opn{DT}} 
\def\aDTm{\aDT^\mot} 
\def\DT{\opn{\mathbf{DT}}} 
\def\DTm{\DT^\mot} 
\def\HDT{\opn{DT}^{E}}
\def\K{\mathbf K} 
\def\pHS{\HS^p} 
\def\pHM{\HM^p} 
\def\f{\mathsf f} 
\def\st{\mathsf s} 
\def\M{\mathcal M} 
\def\Ms{\M^\st} 
\def\Mf{\M^\f} 
\def\MM{\mathfrak M}
\def\p{{}^{\cscale[.8]{\mathrm p}}\!} 
\def\Gv{\bL^\oh-\bL^{-\oh}}
\def\Lap{\La_+}
\def\bLa{\boldsymbol\Lambda}
\def\bga{\boldsymbol\gamma}
\def\m{\mathbf{m}} 
\def\sr{\mathsf r} 
\def\sd{\mathsf d} 
\def\bk{K} 
\def\ohh{{1/2}}
\def\ohh{{1/2}}
\begin{document}
\title[Intersection cohomology of moduli spaces]{Intersection cohomology of moduli spaces of vector bundles over curves}
\author{Sergey Mozgovoy}
\author{Markus Reineke}

\begin{abstract}
We compute the intersection cohomology of the moduli spaces $\M_{r,d}$ of semistable vector bundles
having rank $r$ and degree $d$ over a curve.
We do this by relating the Hodge-Deligne polynomial of the intersection cohomology of $\M_{r,d}$ to the Donaldson-Thomas invariants of the curve.
These invariants can be computed by methods going back to Harder, Narasimhan, Desale and Ramanan.
More generally, we introduce Donaldson-Thomas classes in the Grothendieck group of mixed Hodge modules over $\M_{r,d}$ and relate them to the class of the intersection complex of $\M_{r,d}$.
Our methods can be applied to the moduli spaces of semistable objects in arbitrary hereditary categories.
\end{abstract}
\maketitle
\section{Introduction}
Let $C$ be a complex smooth projective curve of genus $g$.
For $\ga=(r,d)\in\bZ_{>0}\xx\bZ$, let $\M_\ga$
(resp.\ $\Ms_\ga$)
denote the moduli space of semistable (resp.\ stable) vector bundles of rank $r$ and degree $d$ over $C$.
Then $\M_\ga$ is projective and irreducible,
$\Ms_\ga\sbe\M_\ga$ is open and smooth,
and $\Ms_\ga=\M_\ga$ if $r$~and~$d$ are coprime.
If $\Ms_\ga$ is nonempty (this is always the case for $g\ge2$),
then its dimension is $(g-1)r^2+1$.
The history of computation of various invariants of $\M_\ga$ for coprime $r$ and $d$ is rather long.
A recursive formula to determine the Betti numbers of $\M_\ga$
was proved in \cite{harder_cohomology,desale_poincare}
by applying the Weil conjectures
and the Siegel formula \cite{harder_cohomology}
based on the computation of the Tamagawa number of $\SL_n$ over function fields \cite{weil_adeles}.
An alternative proof of this recursive formula was obtained in \cite{atiyah_yang} by using gauge theory.
An algebro-geometric method to prove the Siegel formula was developed
in \cite{bifet_abel-jacobi,ghione_effective}
and this method was used in \cite{bano_chow} to prove the recursive formula for the motivic classes and Hodge polynomials of $\M_\ga$.
Independently, the same recursive formula for the Hodge polynomials of $\M_\ga$ was proved in \cite{earl_hodge} by using equivariant cohomology and a refinement of the Atiyah and Bott approach \cite{atiyah_yang}.
On the other hand, the above recursive formula was solved in \cite{laumon_langlands,zagier_elementary},
hence one obtained a rather explicit formula for the invariants of $\M_\ga$ in the case of coprime $r$ and $d$.

\medskip
In this paper, we compute intersection cohomology of $\M_\ga$ for arbitrary $\ga=(r,d)$.
These invariants were previously determined for $r=2$ in
\cite{kirwan_homology,kirwan_corrigendum},
using partial desingularizations of GIT quotients and related techniques developed in
\cite{kirwan_cohomology,kirwan_partial,kirwan_rational}.
Based on these computations,
as well as computations of intersection cohomology for moduli spaces of low-rank vector bundles on surfaces
\cite[Remark 4.6]{yoshioka_bettia},
it was proposed in \cite{manschot_bps} that intersection cohomology should be related to Donaldson-Thomas (DT) invariants,
also known as BPS invariants.
These invariants are usually defined for moduli spaces of sheaves on $3$-Calabi-Yau varieties or, more generally, for moduli spaces of objects of $3$-Calabi-Yau categories \cite{kontsevich_stability}.
But the construction can be also applied to hereditary categories (abelian categories with vanishing $\Ext^i$ for $i\ge2$)
and, in particular, to the category of coherent sheaves over a curve.
The corresponding DT-invariants $\HDT_{\ga}$ can be computed for arbitrary $\ga$ using the recursive formula mentioned earlier.
After relating these invariants to intersection cohomology of $\M_\ga$, we obtain an effective method to compute the latter.

\medskip

More precisely, the moduli space $\M_\ga$
can be represented as a GIT quotient of a smooth variety~$R_\ga=R_{r,d}$ by
an action of a general linear group $G_{\ga}=G_{r,d}$  \cite{newstead_introduction}.
The cohomology with compact support $H^*_c(R_{\ga},\bQ)$ and $H^*_c(G_{\ga},\bQ)$ can be equipped with mixed Hodge structure
and we can consider
the corresponding Hodge-Deligne polynomials (or $E$-polynomials, see \S\ref{sec:HD})
$$E(R_{\ga}),\ E(G_{\ga})\in \bZ[u^{\pm1},v^{\pm1}].$$
For any $\mu\in\bQ$, we consider the generating series
\begin{equation}\label{ser Q1}
Q_\mu=1+\sum_{d/r=\mu}\bL^{(1-g)r^2/2}\frac{E(R_{r,d})}{E(G_{r,d})}t^r
\in\bQ(u^\oh,v^\oh)\pser t,
\end{equation}
where $\bL=E(\bA^1)=uv$ and $\bL^{1/2}=-(uv)^{1/2}$.
An explicit formula for $Q_\mu$ can be obtained using the methods
from \cite{laumon_langlands,zagier_elementary} mentioned earlier
(see Theorem \ref{zagier}).
In particular, if $r$ and~$d$ are coprime, then
$\frac{E(\M_\ga)}{\bL-1}=\frac{E(R_{\ga})}{E(G_{\ga})},$
hence we can determine Betti and Hodge numbers of~$\M_\ga$.
The role of the other summands of the series $Q_\mu$
is less transparent.
Motivated by the definition of DT invariants in other contexts
(see \eg \cite{mozgovoy_motivic,mozgovoy_motivica}),
we define the DT invariants $\HDT_\ga=\HDT_{r,d}\in\bQ(u^\oh,v^\oh)$
by the formula
\begin{equation}\label{int:DT}
\sum_{d/r=\mu}\HDT_{r,d}t^r
=(\Gv)\Log(Q_\mu),
\end{equation}
where $\Log$ is the plethystic logarithm \eqref{Exp-Log}.
As $Q_\mu$ can be computed explicitly,
we can also compute the DT invariants.
These computations show that $\HDT_{r,d}$ are polynomials with integer coefficients.
A similar statement in the context of quiver representations was proved in \cite{kontsevich_stability,efimov_cohomological}.
\medskip

Now let us describe the relationship between DT invariants and intersection cohomology of moduli spaces.
Given a complex algebraic variety $X$ of dimension $d$,
let $\IC_X\in\Perv(\bQ_X)$ be its intersection complex.
It can be lifted to a mixed Hodge module $\ICV_X$ of weight $0$ (see \S\ref{sec:MHM}) so that $\ICV_X=\bQ_X\ang{d}=\bQ_X(d/2)[d]$ for smooth $X$.
The intersection cohomology
\begin{equation}\label{i-coh1}
\IH^*(X,\bQ)=H^*(X,\ICV_X\ang{-d})
\end{equation}
is a complex of mixed Hodge structures (pure of weight zero if $X$ is projective) and we may consider its Hodge-Deligne polynomial.

\begin{theorem}[See Theorem \ref{main1-proof}]
\label{main1}
If $\Ms_\ga=\es$, then $\HDT_\ga=0$.
If $\Ms_\ga\ne\es$, then
\begin{gather}
\HDT_\ga=E(H^*(\M_\ga,\ICV_{\M_\ga}))
=\bL^{-\dim \M_\ga/2}E(\IH^*(\M_\ga,\bQ))\\
\HDT_{\ga}(y,y)
=(-y)^{-\dim \M_\ga}\sum\nolimits_k\dim\IH^k(\M_\ga,\bQ)(-y)^k.
\end{gather}
\end{theorem}

The object $\ICV_X$ is self-dual with respect to Verdier duality,
hence we obtain the following result.

\begin{corollary}
We have
\begin{enumerate}
\item $\HDT_{\ga}\in\bZ[u,v,(uv)^{-\oh}]$.
\item $\HDT_{\ga}(u\inv,v\inv)=\HDT_{\ga}(u,v)$.
\item $\HDT_{\ga}(-y,-y)\in\bN[y^{\pm1}]$.
\end{enumerate}
\end{corollary}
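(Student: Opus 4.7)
The plan is to deduce all three assertions directly from Theorem \ref{main1} together with standard properties of intersection cohomology on the projective variety $M(r,d)$. Throughout I write $n=\dim M(r,d)$.

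Part (1) is immediate: since $M(r,d)$ is projective, $\IH^*(M(r,d))$ carries a pure Hodge structure with non-negative integer Hodge numbers, so $E(\IH^*(M(r,d)))\in\bZ[u,v]$. Theorem \ref{main1} expresses $\HDT_{r,d}$ as $\bL^{-n/2}E(\IH^*(M(r,d)))$, and since $\bL^{\oh}=-(uv)^{\oh}$ the result visibly lies in $\bZ[u,v,(uv)^{-\oh}]$.

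For part (2) I use the Verdier self-duality of $\bL^{-n/2}\IC_{M(r,d)}[-n]$ already recorded in the remark preceding the corollary. Because $M(r,d)$ is proper, taking (derived, compactly supported) global sections is compatible with Verdier duality, so the element $\bL^{-n/2}\IH^*(M(r,d))\in K_0(\MHS)$ is self-dual of weight zero. The duality functor on $K_0(\MHS)$ acts on Hodge-Euler polynomials by $(u,v)\mapsto(u\inv,v\inv)$, and by Theorem \ref{main1} the Hodge-Euler polynomial of this self-dual element is precisely $\HDT_{r,d}$. Therefore $\HDT_{r,d}(u\inv,v\inv)=\HDT_{r,d}(u,v)$.

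Part (3) follows formally from the second equation of Theorem \ref{main1}: substituting $y\mapsto -y$ and dividing by $y^{n}$ yields
\[
\HDT_{r,d}(-y,-y)=\sum_k \dim\IH^k(M(r,d))\,y^{k-n},
\]
which lies in $\bN[y^{\pm1}]$ because the intersection Betti numbers are non-negative integers. The only non-formal ingredient in the whole argument is the Verdier self-duality invoked in (2); the remaining effort is mostly bookkeeping, and the main thing to watch out for is the sign convention $\bL^{\oh}=-(uv)^{\oh}$ together with the Tate twist arising from Verdier duality, so that the identity $\bL^{-n/2}(u\inv,v\inv)=(uv)^{n}\bL^{-n/2}(u,v)$ collapses the duality into the clean $(u,v)\leftrightarrow(u\inv,v\inv)$ symmetry without stray signs.
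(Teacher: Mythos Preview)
Your proof is correct and follows the same route the paper indicates: the paper does not spell out a proof of this corollary beyond the sentence preceding it, which points to the self-duality and weight-zero purity of $\bL^{-n/2}\IC_{M(r,d)}[-n]$ together with Theorem \ref{main1}. Your three paragraphs simply make that hint explicit, and the bookkeeping with the sign convention $\bL^{\oh}=-(uv)^{\oh}$ is handled correctly.
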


In order to prove Theorem \ref{main1}, we introduce
relative DT classes
$\DT_{\ga}\in\K(\MHM(\M_\ga))$
such that $E(a_!\DT_\ga)=\HDT_\ga$ for the projection $a:\M_\ga\to\pt$
(we also have $E(a_!\ICV_{\M_\ga})=E(H^*(\M_\ga,\ICV_{\M_\ga}))$).
We prove an analogue of Theorem \ref{main1} for these classes.

\begin{theorem}[See Corollary \ref{cor-main2}]
\label{main2}
We have
$\DT_\ga=\begin{cases}
0&\Ms_\ga=\es\\
[\ICV_{\M_\ga}]&\Ms_\ga\ne\es
\end{cases}$
in $\K(\MHM(\M_\ga))$.
\end{theorem}

For the proof of this result we will utilize ideas of
\cite{meinhardt_donaldson}, where moduli spaces of quiver representations were studied;
similar extensions of the methods of \cite{meinhardt_donaldson} were developed independently in \cite{meinhardt_donaldsona}.
In addition, we clarify technical issues in
\cite{meinhardt_donaldsona,meinhardt_donaldson}
in Section \ref{sec:virt small maps}.
First, we will introduce the moduli space $\Mf_\ga$ of stable framed vector bundles.
Under appropriate assumptions, this moduli space is smooth and
the canonical morphism $\pi:\Mf_\ga\to\M_\ga$ is projective.
The fibres of this map over $\Ms_\ga$ are projective spaces,
while the other fibres can be identified with moduli spaces of
stable nilpotent quiver representations (see Theorem \ref {th:fiber}).
This analysis of the fibres will be used in Theorem \ref{framed-semism} to show that $\pi$ is a virtually small map,
which is a generalization of the notion of a small map
(see \S\ref{sec:virt small}).
The properties of virtually small maps (see Theorem \ref{th:LT})
imply that the leading term of $\pi_*\ICV_{\Mf_\ga}$ (\wrt the weight degree on the Grothendieck group of mixed Hodge modules)
can be related to $\ICV_{\M_\ga}$.
On the other hand, we can express the class of $\pi_*\ICV_{\Mf_\ga}$
in terms of DT classes (see Theorem \ref{th:dt def2})
and we show that the leading term in this expression is related to $\DT_\ga$ in Theorem \ref{main:proof}.
While our main focus is on the moduli spaces of vector bundles on a curve, all our proofs generalize verbatim to moduli spaces in other hereditary categories.

\medskip

\subsection*{Acknowledgments}
The first author would like to thank Ben Davison, Jan Manschot
and Andr\'as Szenes for many useful discussions.
Both authors would like to thank J\"org Sch\"urmann for helpful remarks on mixed Hodge modules.

\subsection*{Disclaimer}
In 2025 a particular case of our result was proved in \cite{felisetti_parabolic}.
More precisely, the authors considered the projection $\pi:\Mf\to\M$,
where $\M=\bigsqcup_{r\ge0}\M_{r,0}$ and
$\Mf=\bigsqcup_{r\ge0}\Mf_{r,0}$ is the moduli space of stable framed objects for the parabolic framing functor \eqref{parab}
(one denotes $\cM^\f_{r,0}$ by $\cP_0(r)$ in \cite{felisetti_parabolic}) and proved a specialization of Theorem
\ref{main:proof} (see also Theorem \ref{th:dt def2}) to Poincar\'e polynomials
(see \cite[Theorem 8.7]{felisetti_parabolic}).
As in our paper, the authors perform a careful analysis of the map $\pi$ and its fibres in order to determine the pushforward of the intersection complex.

\section{Virtually small maps}
\label{sec:virt small maps}

\subsection{Mixed t-categories}
For a subcategory $\cS\sbe\cD$ (or a collection of subcategories) of a triangulated category $\cD$,
let $\ang{\cS}\sbe\cD$ be the minimal full subcategory that contains $\cS\cup\set0$ and is closed under extensions.
A t-structure $(\cD^{\le0},\cD^{\ge0})$ \BBD on a triangulated category $\cD$ is called bounded if
$\cD=\angs{\cA[n]}{n\in\bZ}$
for the heart $\cA=\cD^{\le0}\cap\cD^{\ge0}$.
For $M\in\cD$, we define the cohomology object
$H^iM=\ta_{\le0}\ta_{\ge0}(M[i])\in\cA$.
A (bounded) t-category is a triangulated category equipped with a (bounded) t-structure.

\begin{definition}
A bounded t-structure on a triangulated category $\cD$ is called mixed if its heart~$\cA$ is equipped with strictly full subcategories
$\cA_n\sbe\cA$, for $n\in\bZ$, satisfying
\begin{enumerate}
\item $\Hom^i(\cA_m,\cA_n)=0$ for $m<n+i$.
\item $\cA=\angs{\cA_n}{n\in\bZ}$.
\end{enumerate}
Objects of $\cA_n$ are called pure of weight $n$.
An object $M\in\cD$ is called pure of weight $n$
if $H^iM$ is pure of weight $n+i$ for all $i\in\bZ$.
A triangulated category $\cD$ equipped with a mixed t-structure is called a mixed t-category.
An abelian category $\cA$ such that the standard t-structure of $D^b(\cA)$ is mixed is called an (abelian) mixed category.
\end{definition}

\begin{remark}
The above notion is closely related to
ft-categories \cite{beilinson_koszula},
mixed abelian categories~\cite{beilinson_koszul},
weight structures \cite{bondarko_weight},
co-t-structures \cite{pauksztello_compact}.
\end{remark}

\begin{remark}
A structure as above will be called weakly mixed if the first axiom is substituted by the requirement $\Hom^i(\cA_m,\cA_n)=0$ for $m<n+i$, $m\ne n$ and $\Hom^i(\cA_n,\cA_n)=0$ for $i\ge2$.
\end{remark}

\begin{example}
For an algebraic variety $X$ over \bC, the abelian category $\MHM(X)$
of mixed Hodge modules is mixed and of finite length.
The category of pure weight $n$ objects of $\MHM(X)$ is the category
$\pHM(X,n)$ of polarizable pure Hodge modules of weight $n$ over $X$.
We have $\pHM(\pt, n)=\pHS(\bQ,n)$,
the category of polarizable $\bQ$-Hodge structures of weight $n$,
and $\MHM(\pt)=\MHS^p(\bQ)$,
the category of (graded-)polarizable mixed $\bQ$-Hodge structures.
\end{example}

\begin{example}
Let $X_0$ be an algebraic variety over a finite field.
Then the category
$D^b_m(X_0,\bar\bQ_\ell)\sbs D^b_c(X_0,\bar\bQ_\ell)$ of mixed complexes \BBD[\S5.1] is a weakly mixed category.
More precisely, it has a bounded (perverse) t-structure with the heart $\cA$
of finite length equipped with a weight filtration.
For any simple, non-isomorphic objects $M,N\in\cA$ one can prove that
$\Hom^i(M,N)=0$ if $w(M)<w(N)+i$
(the vanishing is proved for $w(M)+1<w(N)+i$ in \BBD and a similar argument can be used for our slightly stronger statement).
One can also prove that $\Hom^i(M,M)=0$ for $i\ge2$.
But $\Hom^1(M,M)$ is always nonzero for simple $M$.
While most of the results in this paper are formulated in terms of mixed Hodge modules, they can be similarly formulated in the context of $\ell$-adic sheaves.
\end{example}

\begin{lemma}
Let $\cA$ be an abelian category and $\cA_n\sbe\cA$, $n\in\bZ$,
be strictly full subcategories such that
$\Hom(\cA_m,\cA_n)=\Ext^1(\cA_m,\cA_n)=0$ for $m<n$
and $\cA=\angs{\cA_n}{n\in\bZ}$.
Then $\cA_n\sbe\cA$ are Serre subcategories of $\cA$
(closed under taking extensions, quotients, subobjects).
Every object $M\in \cA$ has a unique increasing filtration
$W_\bul$ (called the weight filtration) such that
\begin{enumerate}
\item $W_{-n}=0$ and $W_n=M$ for $n\gg0$.
\item $\Gr^W_n M=W_n/W_{n-1}\in\cA_n$ for all $n\in\bZ$.
\end{enumerate}
\end{lemma}
\begin{proof}
Left to the reader.
\end{proof}

\begin{lemma}[Decomposition theorem]
For every pure object $M\in\cD$
there is a (non-canonical) isomorphism $M\iso\bop_{i\in\bZ}H^i(M)[-i]$.
If $\cA_n$ is of finite length, then it is semisimple.
\end{lemma}
\begin{proof}
Left to the reader.
\end{proof}

For $M\in\cA$, we write
$w(M)\le n$ if $\Gr^W_k=0$ for $k>n$
and we write $w(M)\ge n$ if $\Gr^W_kM=0$ for $k<n$.
We define
\[w(M)=\sup\sets{n\in\bZ}{\Gr^W_nM\ne 0}\in\bZ\sqcup\set{-\infty}.\]
For $M\in\cD$, we write $w(M)\le n$ if $w(H^iM)\le n+i$ for all $i\in\bZ$ and we write $w(M)\ge n$ if $w(H^iM)\ge n+i$ for all $i\in\bZ$.
We define
\[w(M)=\max\sets{w(H^iM)-i}{i\in\bZ}\in\bZ\sqcup\set{-\infty}.\]
Let
\begin{gather*}
\cD_{\le n}=\sets{M\in\cD}{w(M)\le n}=\angs{\cA_m[i]}{m+i\le n},\\
\cD_{\ge n}=\sets{M\in\cD}{w(M)\ge n}=\angs{\cA_m[i]}{m+i\ge n}.
\end{gather*}
Then $\cD_{\le n}[1]=\cD_{\le n+1}$,
$\cD_{\ge n}[1]=\cD_{\ge n+1}$ and
\[\Hom(\cD_{\le m},\cD_{\ge n})=0\qquad \forall m<n.\]

\subsection{Exactness}
An additive functor $F:\cD_1\to\cD_2$ between triangulated categories is called exact (or triangulated) if it commutes with translations and preserves distinguished triangles.
For an exact functor $F:\cD_1\to\cD_2$ between t-categories
with the hearts $\cA_1,\cA_2$, we define
\begin{equation}
\p F:\cA_1\to\cA_2,\qquad M\mto H^0FM.
\end{equation}
An exact functor $F:\cD_1\to\cD_2$ between t-categories
is called right (resp.\ left) t-exact if $F(\cD_1^{\le0})\sbe \cD_2^{\le0}$
(resp.\ $F(\cD_1^{\ge0})\sbe \cD_2^{\ge0}$).
We say that $F$ has t-amplitude $[a,b]$ if
$F(\cD_1^{\le0})\sbe \cD_2^{\le b}$
and $F(\cD_1^{\ge0})\sbe \cD_2^{\ge a}$.

\begin{theorem}[See \BBD]
Let $f:X\to Y$ be a morphism between algebraic varieties
with fibres of dimension $\le d$.
Then (for the perverse t-structure on $D^b_c(\bQ_X)$ or the standard t-structure on $D^b\MHM(X)$)
\begin{enumerate}
\item $f_!,f^*$ have t-amplitude $\le d$,
meaning that $f_![d],f^*[d]$ are right t-exact.
\item $f_*,f^!$ have t-amplitude $\ge -d$,
meaning that $f_*[-d],f^![-d]$ are left t-exact.
\end{enumerate}
If $f$ is affine, then $f_*$ is right t-exact
and $f_!$ is left t-exact.
\end{theorem}

An exact functor $F:\cD_1\to\cD_2$ between mixed t-categories
is called right (resp.\ left) w-exact if
$F(\cD_{1,\le0})\sbe\cD_{2,\le0}$
(resp.~$F(\cD_{1,\ge0})\sbe\cD_{2,\ge0}$).
The following result is Deligne's generalization of Weil's conjectures in the $\ell$-adic context
\cite{deligne_la}
and Saito's theorem in the \MHM context
\cite{saito_mixed}.

\begin{theorem}[See {\cite{deligne_la,saito_mixed}}]
Let $f:X\to Y$ be a morphism between algebraic varieties.
Then
\begin{enumerate}
\item $f_!,f^*$ are right w-exact.
\item $f_*,f^!$ are left w-exact.
\end{enumerate}
\end{theorem}

\begin{remark}\label{rm:exact ts}
The external tensor product \cite{saito_mixed}
\[\boxtimes:\MHM(X)\xx \MHM(Y)\to \MHM(X\xx Y)\]
is w-exact, meaning that if $M\in\MHM(X)$ and $N\in\MHM(Y)$ are pure, then $M\boxtimes N$ is pure of weight $w(M)+w(N)$.
The induced external tensor product on the derived categories
is automatically t-exact (and w-exact).
On the other hand, the tensor product $\ts^*$ on $D^b\MHM(X)$, defined by $M\ts^* N=\De^*(M\boxtimes N)$ for the diagonal $\De:X\to X\xx X$,
is right t-exact and right w-exact (\cf \BBD[5.1.14]).
The duality functor $\bD:\MHM(X)\to\MHM(X)^\op$ maps objects of weight $n$ to objects of weight $-n$ (actually $\bD M\iso M(n)$ for any pure object $M$ of weight $n$).
\end{remark}

\subsection{Mixed Hodge modules}
\label{sec:MHM}
For an algebraic variety $X$ over $\bC$, let $d_X=\dim X$ denote the maximal dimension of its irreducible components.
Let $\MHM(X)$ be the category of mixed Hodge modules over $X$.
Let $D^b_c(\bQ_X)$ be the derived category of bounded $\bQ$-complexes with constructible cohomology and let $\Perv(\bQ_X)\sbs D^b_c(\bQ_X)$ be the subcategory of perverse sheaves.
The exact functor
$\rat:D^b\MHM(X)\to D^b_c(\bQ_X)$
maps $\MHM(X)$ to $\Perv(\bQ_X)$ and is compatible with Grothendieck's six operations on both sides.
We have $\MHM(\pt)=\MHS^p(\bQ)$,
the category of (graded-)polarizable mixed $\bQ$-Hodge structures.

Let $\bQ(n)\in\MHM(\pt)$ be the \idef{Tate object} of weight $-2n$ for $n\in\bZ$.
It induces the \idef{Tate twist} functor
$\MHM(X)\to\MHM(X)$, $M\mto M(n)=M\ts \bQ(n)$.
We consider the \idef{Lefschetz object}
\begin{equation}\label{lef1}
\bL=H^*_c(\bA^1,\bQ)=\bQ(-1)[-2]\in D^b\MHM(\pt)
\end{equation}
of weight zero and the corresponding functor
\begin{equation}
\bL:D^b\MHM(X)\to D^b\MHM(X),\qquad
M\mto M(-1)[-2].
\end{equation}
Note that $H^*_c(\bP^n,\bQ)=\bQ\oplus \bL\oplus\dots\oplus\bL^n$ for $n\ge0$.

In what follows, we will require the class $\bL^{1/2}$ at the level of Grothendieck groups (see \S\ref{GG1}),
but it will be convenient to have a related object also at the level of categories.
We have two approaches to this problem.
The first one is to embed $\MHM(X)$ into the category of monodromic mixed Hodge modules $\MMHM(X)$
(see \cite{kontsevich_cohomological,davison_cohomological}),
so that one has the objects $\bQ(1/2)\in\MMHM(\pt)$ and $\bL^{1/2}=\bQ(-1/2)[-1]\in D^b\MMHM(\pt)$ and the corresponding twist functors.
The second approach is to construct the root category $\cA^{1/2}$
and the functor $\bT^{1/2}=\bQ(1/2):\cA^{1/2}\to\cA^{1/2}$
for the category $\cA=\MHM(X)$ and the functor $\bT=\bQ(1):\cA\to\cA$.
More generally, we construct the root category $\cA^{1/r}$ and the functor $\bT^{1/r}:\cA^{1/r}\to\cA^{1/r}$ for any $r\ge1$ as follows.
We define $\cA^{1/r}=\bop_{i=0}^{r-1}\cA^{(i)}$ with $\cA^{{(i)}}=\cA$ and the functor $\bT^{1/r}$ sending
$\cA^{(i)}\ni M\mto M\in\cA^{(i+1)}$ for $0\le i<r-1$
and $\cA^{(r-1)}\ni M\mto\bT M\in\cA^{(0)}$.
We embed $\cA=\cA^{(0)}\emb\cA^{1/r}$ so that $(\bT^{1/r})^r=\bT$.
The weight of $M\in\cA^{(i)}\sbs\cA^{1/2}$ is defined to be $w(M)-i$ (if $M\in\cA$ is pure) so that $\bT^{1/2}$ has weight $-1$ and $\bT=\bQ(1)$ has weight $-2$ as before.
We can also define the functor $\bQ(n/2)$ for any $n\in\bZ$.
The same construction can be applied to $D^b\MHM(X)$ and we define
$\bL^{1/2}=\bQ(-1/2)[-1]$.
By abuse of notation we will continue to write $\MHM(X)$ instead of $\MHM(X)^{1/2}$.
We define
\begin{equation}
M\ang n=M(n/2)[n]=\bL^{-n/2}M,\qquad M\in D^b\MHM(X).
\end{equation}
Note that the weight of $M\ang n$ is equal to the weight of $M$.

Let $X$ be an irreducible algebraic variety of dimension $d$.
The object
\[\bQ_X=a_X^*\bQ\in D^b\MHM(X),\qquad a_X:X\to\pt,\]
has weight $\le0$.
If $X$ is smooth, then $\bQ_X$ has weight 0 and $\bQ_X[d]\in\MHM(X)$.
Generally, we define the intersection complex
\begin{equation}
\ICV_X
=j_{!*}(\bQ_U\ang d)\in\MHM(X),
\end{equation}
where $j:U\emb X$ is an embedding of a nonempty, open, smooth subvariety and
\begin{equation}
j_{!*}(M)=\Im(\p j_! M\to \p j_* M)\in\MHM(X),\qquad M\in\MHM(U).
\end{equation}
The object $\ICV_X$ is self-dual (meaning that $\bD\ICV_X\iso\ICV_X$)
and has weight zero.
The object $\IC_X=\rat(\ICV_X)$
is the perverse intersection complex in $\Perv(\bQ_X)$.
We define the intersection cohomology
\begin{equation}\label{i-coh2}
\IH^*(X,\bQ)
=H^*(X,\ICV_X\ang{-d})
\end{equation}
so that $\IH^*(X,\bQ)=H^*(X,\bQ)$ for smooth $X$.

\begin{remark}
Some authors define the intersection complex
$\ICV'_X=j_{!*}(\bQ_U[d])\in\MHM(X)$.
It is pure of weight $d$ and satisfies $\bD\ICV'_X\iso\ICV'_X(d)$ (the same is true for any pure object of weight $d$).
One has $\ICV'_X=\Gr^W_d H^d\bQ_X$ \sai,
where $H^d$ is the $d$-th cohomology in $D^b\MHM(X)$.
\end{remark}

\subsection{Degrees}
\label{GG1}
For a graded abelian group $V=\bop_{i\in \bZ}V_i$
and $x=\sum_{i\in \bZ} x_i\in V$ with $x_i\in V_i$, let
\begin{equation}
\deg(x)=\sup\sets{i}{x_i\ne0}\in\bZ\sqcup\set{-\infty}.
\end{equation}
If $A$ is a graded integral domain and $V$ is a graded torsion-free $A$-module, then
\begin{equation}
\deg(ax)=\deg(a)+\deg(x)\qquad \forall a\in A,\, x\in V.
\end{equation}
We extend the degree map to $\cF(A)\ts_A V$,
where $\cF(A)$ is the fraction field of $A$,
by the formula
\begin{equation}
\deg(x/a)=\deg(x)-\deg(a)\qquad \forall a\in A\ms\set0,\, x\in V.
\end{equation}

For an abelian category $\cA$ (resp.~ a triangulated category $\cD$), let $K(\cA)$ (resp.~$K(\cD)$) denote its Grothendieck group.
Let $[M]\in K(\cD)$ denote the class of $M\in\cD$.
If $\cD$ is a bounded t-category with the heart~$\cA$, then
the canonical map $K(\cA)\to K(\cD)$ is an isomorphism.
If $\cD$ is a mixed t-category with the heart $\cA$,
then the Grothendieck group $K(\cA)\iso K(\cD)$ is graded,
where we define $\deg[M]=n$ for a pure object $M\in\cA$ of weight~$n$.
If~$\cA$ is of finite length and $C_n$ denotes
the set of isomorphism classes of simple objects in $\cA_n$, then $K(\cA)\iso\bop_{n\in\bZ}\bZ C_n$.

In particular, the Grothendieck group
$K(\MHM(X))\iso K(D^b\MHM(X))$
is a graded module over the graded ring $K(\MHM(\pt))$ (\cf \S\ref{sec:la-MHM}).
Let
\begin{gather}
\bL=[H_c^*(\bA^1,\bQ)]=[\bQ(-1)]\in K(\MHM(\pt))\label{lef2}\\
\K(\MHM(X))=K(\MHM(X))\ts_{\bZ[\bL]}\bZ[\bL^{\pm1/2},(1-\bL^n)\inv\mid n\ge1].\label{ext K1}
\end{gather}
Note that $\bL$ has degree $2$.
We extend the degree function to $\K(\MHM(X))$ so that $\bL^{1/2}$ has degree $1$.
For $M\in D^b(\MHM(X))$, let $\deg (M)=\deg[M]$.
For example, $\deg(\bQ[n])=0$, while $w(\bQ[n])=n$ for $n\in\bZ$.
The duality functor $\bD$ on $\MHM(X)$ induces the group homomorphism
$\bD:\K(\MHM(X))\to \K(\MHM(X))$ such that $\bD(\bL^n a)=\bL^{-n}\bD(a)$ for $n\in\oh\bZ$ and $a\in\K(\MHM(X))$.
The class $[\ICV_X]\in\K(\MHM(X))$ is self-dual
(meaning that $\bD[\ICV_X]=[\ICV_X]$).

\subsection{Decompositions}
We say that an object $M\in\MHM(X)$
is supported on a locally closed subvariety $Z\sbe X$
if $M=j_{!*}\p j^*M$ for the embedding $j:Z\emb X$,
meaning that the canonical map $M\to \p j_*\p j^*M$ induces an isomorphism $M\isoto j_{!*}\p j^*M\emb \p j_*\p j^*M$.
For a simple object $M$, this means that $Z$ contains a nonempty open subset of $\supp M$.
We say that $M\in D^b\MHM(X)$ is supported on
$Z$ if every $H^iM$ is supported on $Z$.
In what follows a partition of $X$ means a finite collection $\cS$ of locally closed subsets of $X$ such that $X=\bigsqcup_{S\in\cS}S$.

\begin{lemma}\label{lm:decomp1}
Let $\cS$ be a partition of an algebraic variety $X$
and $M\in\MHM(X)$ be a pure object.
Then there is a unique decomposition
$M=\bop_{S\in\cS}M_S$, where $M_S$ is supported on $S$.
We have $M_S=j_{S!*}\p j_S^*M$.
\end{lemma}
\begin{proof}
We can assume that $M$ is simple.
There exists a smooth, irreducible, locally closed $U\sbs X$ such that
$M=j_{!*}\p j^*M$ for $j:U\emb X$.
Let $S\in\cS$ be such that $d_{S\cap U}$ is maximal.
Then $S\cap U$ is open in $U$.
We can substitute $U$ by $U\cap S$ and assume that $U\sbs S$.
Then $M=j_{S!*}\p j_S^*M$.

To prove uniqueness, we need to show that
$\p j^*j_{!*}=\id$ and $\p i^*j_{!*}=0$
for two embeddings $j:S\emb X$ and $i:T\emb X$ such that $S\cap T=\es$.
The first equation is clear.
As $j_{!*}N=\Im(\p j_{!}N\to \p j_{*}N)$ and
$\p i^*$ is right exact, it is enough to show that
$\p i^*\p j_{!}=0$.
We have
$\p i^*\p j_{!}=\ta_{\ge0}i^*\ta_{\ge0}j_{!}
=\ta_{\ge0}i^*j_{!}=0$ as
$i^*j_{!}=0$ and $\ta_{\ge0}i^*\ta_{\ge0}=\ta_{\ge0}i^*$.
\end{proof}

\begin{lemma}\label{lm:decomp2}
Let $\cS$ be a partition of an algebraic variety $X$
and $M\in D^b\MHM(X)$ be a pure object.
Then there is a decomposition
$M=\bop_{S\in\cS}M_S$,
where $M_S$ is supported on $S$.
We have $M_S\iso \bop_i (j_{S!*}\p j_S^*H^iM)[-i]$.
\end{lemma}
\begin{proof}
Let $e_S=j_{S!*}\p j^*_S$ and
$\bar e_S(M)=\bop_i (e_S H^iM)[-i]$ for $S\in\cS$.
We have $e_S^2=e_S$ and $e_Se_T=0$ for $S\ne T$.
Therefore $\bar e_S^2=\bar e_S$ and $\bar e_S\bar e_T=0$ for $S\ne T$.
Using $M\iso\bop_i (H^iM)[-i]$
and applying the previous result to every $H^iM$, we obtain
$M\iso\bop_S \bar e_S(M)$.
Let $M=\bop_S M_S$ be another decomposition with $M_S$ supported on $S$.
We have $\bar e_S (M_T)=0$ for $S\ne T$, hence
$\bar e_S(M)=\bar e_S(M_S)$.
By the same argument as before, we have $M_S\iso\bop_T \bar e_T(M_S)=\bar e_S(M_S)=\bar e_S(M)$.
\end{proof}

\subsection{Defects}
\label{sec:virt small}
Let $\pi:X\to Y$ be a projective morphism between algebraic varieties.
Let $Y_i=\sets{y\in Y}{\dim \pi\inv(y)=i}$ for $i\in\bN\sqcup\set{-\infty}$ (note that $\dim\es=-\infty$).
The subsets $Y_{\le k}=\bigsqcup_{i\le k}Y_i$ are open in $Y$.
For a subvariety $Z\sbe Y$, we define the (fibre) defect
(\cf~\cite{goresky_stratified,cataldo_hodgea})
\begin{equation}\label{def1}
\de(\pi,Z)=\max\sets{2i+d_{Z\cap Y_i}-d_X}{i\ge0}.
\end{equation}
If $\cS$ is a finite partition of~$Y$
such that the fibres of $\pi$ over $S\in\cS$ have dimension $c_S$, then
\begin{equation}
\de(\pi,Z)=\max_{S\in\cS}{\de(\pi,Z\cap S)},\qquad
\de(\pi,Z\cap S)=2c_S+d_{Z\cap S}-d_X.
\end{equation}

Note that $\de(\pi)=\de(\pi,Y)=d_{X\xx_YX}-d_X\ge0$.
A (surjective) morphism $\pi$ is called semismall if $\de(\pi)=0$ and  small if $\de(\pi)=0$ and $\de(\pi,Y_i)=0$ only for $i=0$.

\begin{definition}\label{def:v-small}
A projective morphism $\pi:X\to Y$ is called $n$-small
(or virtually small if $n$ is clear from the context)
if there exists an open subset $U\sbe Y$
such that $\de(\pi,Y\ms U)<n$ and $U\sbe Y_n$.
Note that $\de(\pi)\le n$ if $\pi$ is $n$-small.
\end{definition}

\begin{remark}
If $\de(\pi)<n$, we can assume that the open set $U$ above is empty.
If $\de(\pi)=n$, we can assume that $U$ is smooth and equidimensional (of dimension $d_X-n$).
If $X$ is smooth, we can assume that $\pi$ is smooth over $U$.
\end{remark}

\begin{lemma}
If $\pi:X\to Y$ is a projective surjective morphism and $X$, $Y$ are irreducible, then $\pi:X\to Y$ is $n$-small if and only if $\de(\pi,Y_i)<n$ for $i\ne n$ and $\de(\pi,Y_n)\le n$.
If $\de(\pi,Y_n)=n$, then $Y_n\sbe Y$ is open and $n=d_X-d_Y$.
\end{lemma}
\begin{proof}
Let $\pi$ be $n$-small and $U\sbe Y_n$ be such that $\de(\pi,Y\ms U)<n$.
Then $Y_i\sbe Y\ms U$ for $i\ne n$, hence $\de(\pi,Y_i)<n$.
We also have $\de(\pi,Y_n)\le\de(\pi)\le n$.
Conversely, assume that $\de(\pi,Y_i)<n$ for $i\ne n$ and $\de(\pi,Y_n)\le n$.
If $\de(\pi,Y_n)<n$, then $\de(\pi)<n$ and we can take $U=\es$.
If $\de(\pi,Y_n)=n$, then $d_X=n+d_{Y_n}=d_{\pi\inv(Y_n)}$, hence $\pi\inv(Y_n)\sbe X$ is open.
The subset $\pi\inv(Y_{<n})$ is also open, hence $\pi\inv(Y_{<n})=\es$ and $Y_{<n}=\es$.
Therefore $Y_n\sbe Y$ is open and we can take $U=Y_n$.
We have $d_X=n+d_{Y_n}=n+d_Y$, hence $n=d_X-d_Y$.
\end{proof}

In what follows, let $\cD$ denote the t-category $D^b\MHM(X)$
with the standard t-structure
or the t-category $D^b_c(\bQ_X)$ with the perverse t-structure.
These t-structures are compatible under the functor
$\rat:D^b\MHM(X)\to D^b_c(\bQ_X)$.
For $K\in D^b_c(\bQ_X)$, let $\sH^iK\in\Sh(\bQ_X)$ denote the $i$-th cohomology sheaf and $\p H^iK\in\Perv(\bQ_X)$ denote the $i$-th cohomology object \wrt the perverse t-structure.

\begin{lemma}\label{strat bound}
Let $M\in \cD=D^b\MHM (Y)$ and $\cS$ be a partition of $Y$
such that $j_S^*M\in \cD^{\le n}$ for the embeddings $j_S:S\emb Y$, $S\in\cS$.
Then $M\in\cD^{\le n}$.
\end{lemma}
\begin{proof}
Let $U\sbe Y$ be open and $j:U\emb Y$, $i:Y\ms U\emb U$ be the corresponding embeddings.
For every $M\in D^b(\MHM Y)$ there is a triangle
$j_!j^*M\to M\to i_!i^*M\to$.
If $i^*M\in\cD^{\le n}$ and $j^*M\in\cD^{\le n}$, then $M\in\cD^{\le n}$ as $i_!,j_!$ are right t-exact.
Given a partition, we can refine it so that it satisfies the
frontier condition: if $S\cap\bar T\ne\es$, then $S\sbe\bar T$ for $S,T\in\cS$.
Note that we still have $j^*_S M\in\cD^{\le n}$ for all $S$ as the restriction functor is right exact.
We can apply the previous argument inductively and deduce that $M\in\cD^{\le n}$.
\end{proof}

\begin{lemma}
\label{defect bound}
Let $\pi:X\to Y$ be a projective morphism such that $X$ is smooth.
For a locally closed embedding $j:Z\emb Y$
we have $j^*\pi_*\ICV_X\in\cD^{\le \de(\pi,Z)}$.
\end{lemma}
\begin{proof}
By Lemma \ref{strat bound} we can assume that $Z\sbs Y_k$ for some $k\ge0$.
Consider the Cartesian square
\[\begin{tikzcd}
X'\rar["j'"]\dar["\pi'"']&X\dar["\pi"]\\
Z\rar["j"]&Y
\end{tikzcd}\]
An object $K\in D^b_c(\bQ_X)$ is contained in $\p D^{\le n}$ \iff $\dim(\supp \sH^iK)\le -i+n$ for all $i\in\bZ$.
We have
$j^*\pi_*\IC_X=\pi'_*j'^*\bQ_X[d_X]$.
The object $K=j'^*\bQ_X[d_X]$ satisfies
$\dim(\supp \sH^iK)=d_{X'}=-i+(d_{X'}-d_X)$ for $i=-d_X$
and $\sH^i K=0$ otherwise.
Therefore $K\in\p D^{\le d_{X'}-d_X}$.
The functor $\pi'_*$ has amplitude $[-k,k]$.
Therefore
$j^*\pi_*\IC_X
=\pi'_*K
\in\p D^{\le k+d_{X'}-d_X}
=\p D^{\le\de(\pi,Z)}.$
\end{proof}

\begin{lemma}\label{lm:deg1}
Let $f:X\to Y$ be a morphism between algebraic varieties and
$M\in \cD=D^b\MHM (Y)$ be a pure object of weight $0$
such that $f^*M\in \cD^{\le n}$.
Then $\deg({\p f^*}H^iM)\le n$ for all $i\in\bZ$.
\end{lemma}
\begin{proof}
By our assumption $f^*(H^iM)[-i]\in\cD^{\le n}$,
hence $f^*H^iM\in\cD^{\le n-i}$.
If $\p f^*H^iM\ne0$, then $i\le n$.
As $H^iM$ has weight $i$, the object
$\p f^*H^iM$ has weight $\le i\le n$.
\end{proof}

\begin{lemma}\label{lm:deg2}
Let $\pi:X\to Y$ be a projective morphism with smooth $X$.
Let $\cS$ be a partition of $Y$ and $\pi_*\ICV_X=\bop_{S\in\cS} M_S$ be a decomposition such that $M_S$ is supported on $S$.
Then $\deg M_S\le\de(\pi,S)$ for all $S\in\cS$.
\end{lemma}
\begin{proof}
We have $M_S\iso\bop_i (j_{S!*}\p j^*_S H^iM)[-i]$ for $M=\pi_*\ICV_X$, hence we need to show that $\deg \p j^*_SH^iM\le\de(\pi,S)$.
The object $M$ is pure of weight zero and
by Lemma \ref{defect bound} we have $j^*_S M\in\cD^{\le\de(\pi,S)}$.
By Lemma \ref{lm:deg1} we obtain $\deg\p j_S^* H^iM\le\de(\pi,S)$.
\end{proof}

\subsection{The leading term}
\label{sec:LT}
If $\pi:X\to Y$ is a projective morphism and is a (topological) fibre bundle with all fibres irreducible and of dimension~$n$,
then (\cf \cite{gottsche_perverse})
\begin{equation}\label{top cohom}
H^{n}\pi_*(\bQ_X[d_X])\iso \bQ_Y[d_Y](-n).
\end{equation}
If $X$ and $Y$ are smooth (and irreducible), we obtain
(\cf Appendix \ref{RHL})
\begin{equation}
H^n \pi_*\ICV_X\iso \ICV_Y(-n/2).
\end{equation}

\begin{theorem}\label{th:LT}
Let $\pi:X\to Y$ be an $n$-small morphism with smooth $X$
and let $U\sbe Y$ be open smooth equidimensional such that $\de(\pi,Y\ms U)<n$, $\de(\pi,U)=n$ and the fibres of $\pi$ over $U$ are smooth connected of dimension $n$.
Then $\pi_*\ICV_X$ has degree $n$ and
the leading term $\ICV_{\ubar U}\ang{-n}$.
\end{theorem}
\begin{proof}
Let $j:U\emb Y$ and $i:Z=Y\ms U\emb Y$ be the embeddings.
For $M=\pi_*\ICV_X$, we have
$M=M_U\oplus M_Z$, where $M_U\iso \bop_k(j_{!*}j^*H^kM)[-k]$
and $M_Z\iso \bop_k(i_{!*}\p i^*H^kM)[-k]$.
We have
$\deg(M_Z)\le\de(\pi,Z)<n$
by Lemma \ref{lm:deg2}.
On the other hand $j^*H^kM=H^k j^*M=H^k \pi'_*\ICV_{X'}$ for $\pi':X'=\pi\inv(U)\to U$.
The object $H^k\pi'_*\ICV_{X'}$ is zero for $k>n$ and has degree $<n$ for $k<n$.
On the other hand $H^n\pi'_*\ICV_{X'}=\ICV_U(-n/2)$,
hence the leading term of $M_U$ is
$(j_{!*}\ICV_U(-n/2))[-n]=\ICV_{\ubar U}(-n/2)[-n]$.
\end{proof}

We prove the following well-known result
(\cf \cite[Theorem 5]{gottsche_perverse}) for completeness.

\begin{theorem}
Let $\pi:X\to Y$ be a semismall morphism with smooth $X$.
Let $\cS$ be a smooth partition of $Y$ such that $\pi$ is a (topological) fibre bundle with irreducible fibres over every $S\in\cS$.
Then $\pi_*\ICV_X=\bop_{\de(\pi,S)=0}\ICV_{\bar S}$.
\end{theorem}
\begin{proof}
We have $M=\pi_*\ICV_X\in\cD^{\le0}$ by Lemma \ref{defect bound}.
As $M$ is self-dual, we conclude that $M\in\MHM(X)$.
By Lemma \ref{lm:decomp1} we have $M=\bop_S \p j_{S!*}\p j^*_S M$.
If $\de(\pi,S)<0$, then $j_S^*M\in\cD^{<0}$, hence $\p j_S^*M=0$.
Let us assume that $\de(\pi,S)=0$ and let $n=d_X-d_{X'}$ be the fibre dimension, where $X'=\pi\inv(S)$.
Then
\[
\p j_S^*M
=H^0(\pi'_{*}\bQ_{X'}[d_X](d_X/2))
=H^{n}(\pi'_{*}\bQ_{X'}[d_{X'}])(d_X/2)
\iso\bQ_S[d_S](d_X/2-n)=\ICV_S,
\]
where we applied \eqref{top cohom} to $\pi':X'\to S$.
\end{proof}

\section{\tpdf{\la}{lambda}-rings over commutative monoids}
\label{sec:monoids}

\subsection{Pre-\tpdf{\la}{lambda}-ring of motivic functions}
Let $\Sch=\Sch_K$ be the category of algebraic schemes over a field $K$ of characteristic zero, meaning schemes $X$ equipped with a finite type morphism $a_X:X\to\pt=\Spec(K)$.
For a scheme (or an Artin stack) $S$ locally of finite type over $K$,
let $\Sch\qt S$ be the category of morphisms $X\to S$ over $K$, where $X\in\Sch$.
For a morphism $f:S\to T$ we have the functor
\[f_!:\Sch\qt S\to\Sch\qt T,\qquad [X\to S]\mto[X\to S\to T].\]
If $f:S\to T$ is of finite type, then $f_!$ has the right adjoint functor
\[f^*:\Sch\qt T\to\Sch\qt S,\qquad [X\to T]\mto[X\xx_T S\to S].\]
We also have the exterior product
\begin{equation*}
\boxtimes:\Sch\qt S\xx\Sch\qt T\to\Sch\qt (S\xx T),\qquad
[X\to S]\boxtimes[Y\to T]=[X\xx Y\to S\xx T].
\end{equation*}

Let $K(\Sch\qt S)$ be the Grothendieck group of algebraic schemes over $S$ (see \eg \cite{joyce_motivic,bridgeland_introduction}).
The above functors induce morphisms between Grothendieck groups.
The Grothendieck group $K(\Sch)=K(\Sch\qt\pt)$ has a commutative ring structure defined by $[X]\cdot [Y]=[X\xx Y]$.
The Grothendieck group $K(\Sch\qt S)$ is a module over $K(\Sch)$.
We define (\cf \eqref{ext K1})
\begin{gather}
\bL=[\bA^1]\in K(\Sch),
\label{lef3}\\
\K(\Sch\qt S)
=K(\Sch\qt S)\ts_{\bZ[\bL]}\bZ[\bL^{\pm1/2},(1-\bL^n)\inv\col n\ge1].
\label{ext K2}
\end{gather}
Consider a morphism $f:\cX\to S$, where $\cX$ is a finite type Artin stack with affine stabilizers.
Then there exists an algebraic variety $X$ with an action of the group $\GL_n$ and a geometric bijection $X\qt\GL_n\to \cX$
(see \eg \cite{bridgeland_introduction}).
The class
\begin{equation}
[\cX\to S]=[X\to\cX\to S]/[\GL_n]\in\K(\Sch\qt S)
\end{equation}
depends only on the morphism $f$ (see \eg \cite{bridgeland_introduction}).

Let $(S,\mu,\eta)$ be a commutative monoid in the category of schemes (or Artin stacks) over $K$.
We equip $\Sch\qt S$ with a symmetric monoidal structure
having the tensor product
\begin{equation}\label{ts:Sch}
[X\xto fS]\ts[Y\xto gS]=\mu_!(f\boxtimes g)
=[X\xx Y\to S\xx S\xto\mu S]
\end{equation}
and the unit object $\one=[\pt\xto\eta S]$.
This tensor product induces the ring structure on
$K(\Sch\qt S)$.
We equip $K(\Sch\qt S)$ with a pre-\la-ring structure having \si-operations
\begin{equation}
\si^n[X\to S]=[S^n(X)\to S^n(S)\xto\mu S],\qquad n\ge1,
\end{equation}
where $S^n(X)=X^n/S_n$.
We extend the pre-\la-ring structure to $\K(\Sch\qt S)$ by the formula
\begin{equation}\label{ext la-str}
\si^n(y^m a)=y^{mn}\si^n(a),\qquad
y=-\bL^{1/2},\,a\in\K(\Sch\qt S)
\end{equation}
In particular, $\pt$ is a commutative monoid, hence $K(\Sch)$ and $\K(\Sch)$ are pre-\la-rings.
The maps $\eta_!:K(\Sch)\to K(\Sch\qt S)$
and $a_{S!}:K(\Sch\qt S)\to K(\Sch)$ (induced by the unit $\eta:\pt\to S$ and the projection $a_S:S\to\pt$) are homomorphisms of pre-\la-rings .

\subsection{Graded pre-\tpdf{\la}{lambda}-rings and completions}
\label{sec:gr-pre-la}
More generally, let $\La$ be a commutative monoid and $S=\bigsqcup_{\ga\in\La}S_\ga$ be a $\La$-graded commutative monoid in the category of schemes over $K$.
The pre-\la-rings $K(\Sch\qt S)=\bop_{\ga\in\La}K(\Sch\qt S_\ga)$
and $\K(\Sch\qt S)=\bop_{\ga\in\La}\K(\Sch\qt S_\ga)$
are $\La$-graded pre-\la-rings, meaning that
\begin{equation}\label{gr la-ring}
\deg(ab)=\deg(a)+\deg(b),\qquad
\deg\si^n(a)=n\deg a.
\end{equation}
In particular, we consider the commutative monoid
$\bLa=\bigsqcup_{\ga\in\La}\pt_\ga$ with $\pt_\ga=\Spec(K)$ in the category of schemes over $K$.
Then
$K(\Sch\qt\bLa)=K(\Sch)[\La]=\bop_{\ga\in\La}K(\Sch)t^\ga$ is a
\La-graded pre-\la-ring with the product and \si-operations
\[[X]t^\ga\cdot [Y]t^{\ga'}=[X\xx Y]t^{\ga+\ga'},\qquad
\si^n([X]t^\ga)=[S^nX]t^{n\ga}.\]
The degree map
\begin{equation}\label{deg map}
\deg:S\to\bLa,\qquad S_\ga\ni x\mto\pt_\ga,
\end{equation}
is a homomorphism of commutative monoids
and induces a homomorphism of pre-\la-rings
$\deg_!:K(\Sch\qt S)\to K(\Sch\qt \bLa)=K(\Sch)[\La]$.

Assume that there exists a filtration
$\La=\La_0\spe\La_1\spe\dots$ such that
\begin{equation}\label{filt}
\La_i+\La_j\sbe\La_{i+j},\qquad
\n{\La\ms\La_i}<\infty,\qquad
\bigcap\nolimits_{i\ge0}\La_i=\es.
\end{equation}
For example, if $\La\sbe\bN^n$ for some $n\ge1$, we can define
$\La_k=\sets{\ga\in\La}{\sum_{i=1}^n\ga_i\ge k}$.
For a $\La$-graded ring $A=\bop_{\ga\in\La}A_\ga$, we define its completion
\begin{equation}\label{compl}
\what A=\ilim_k \rbr{A/\bop\nolimits_{\ga\in\La_k}A_\ga}
\iso\prod\nolimits_{\ga\in\La}A_\ga,
\end{equation}
where the last isomorphism is an isomorphism of abelian groups.
In particular, the completion $\what\K(\Sch\qt S)\iso\prod_{\ga\in\La}\K(\Sch\qt S_\ga)$
inherits the structure of a pre-\la-ring.
Let us assume that
$\La_1=\Lap=\La\ms\set0$ and consider the ideal
$\what\K_+(\Sch\qt S)=\prod_{\ga\in\Lap}\K(\Sch\qt S_\ga)$.
The plethystic exponential
\begin{equation}\label{Exp}
\Exp:\what\K_+(\Sch\qt S)\to 1+\what\K_+(\Sch\qt S),\qquad
a\mto \sum_{n\ge0}\si^n(a),
\end{equation}
where $\si^0(a)=1$,
is a group isomorphism (between the additive and the multiplicative groups).

\subsection{\tpdf{\la}{lambda}-ring of mixed Hodge modules}
\label{sec:la-MHM}
Let $(S,\mu,\eta)$ be a commutative monoid in the category of complex
algebraic varieties such that the product $\mu:S\xx S\to S$
is a finite map.
We will equip the category $\MHM(S)$ with a symmetric monoidal structure similarly to the case of $\Sch\qt S$ considered earlier (see
\eqref{ts:Sch}).

\begin{theorem}\label{th:sym monoidal}
The category $\MHM(S)$ with the tensor product
\[\ts:\MHM(S)\xx \MHM(S)\to \MHM(S),\qquad E\ts F=\mu_!(E\boxtimes F),\]
and the unit object $\one=\eta_!\bQ$
is a symmetric monoidal category.
The tensor product is exact and w-exact.
\end{theorem}
\begin{proof}
Let $\si:S\xx S\to S\xx S$ be the permutation of factors.
By \cite[Theorem 1.9]{maxim_symmetric}
(applied to the variety $X=S\sqcup S$),
there exists a canonical isomorphism
$$\si^\#:E\boxtimes F\to \si_*(F\boxtimes E)$$
satisfying $\si_*\si^\#\circ \si^\#=\Id$.
Applying the pushforward $\mu_!$ and using commutativity of the monoid,
we obtain an isomorphism $\si_{EF}:E\ts F\to F\ts E$ satisfying $\si_{EF}\circ\si_{FE}=\Id$.
This is the required braiding for the tensor product.
We have $\one\ts F
=\mu_!(\eta_!\bQ\boxtimes F)
=\mu_!(\eta\xx\id)_!(\bQ\boxtimes F)=F$.
The tensor product is exact (resp.\ w-exact) as both $\mu_!$ and $\boxtimes$ are exact (resp.\ w-exact).
\end{proof}

The above tensor product $\ts$ on $\MHM(S)$
should not be confused with the tensor product $\ts^*$ on $D^b\MHM(S)$ defined in Remark \ref{rm:exact ts},
which is only right w-exact.

\begin{remark}
Without the assumption that $\mu$ is finite, we can similarly equip the category $D^b\MHM(S)$ with the symmetric monoidal structure.
There is a monoidal functor
\begin{equation}\label{chi1}
\hi_c:\Sch\qt S\to D^b\MHM(S),\qquad
[X\xto fS]\mto f_!\bQ_X.
\end{equation}
\end{remark}

Let $\cA=\MHM(S)$ (or any other Karoubian symmetric monoidal category linear over $\bQ$).
By \cite{getzler_mixed,heinloth_note} the split Grothendieck group
$\ub K(\cA)$ (with relations induced by direct sums)
is a (special) \la-ring,
with the product and \si-operations defined by
\begin{gather}
[E]\cdot[F]=[E\ts F],\\
\si^n[E]=[S^n(E)],\qquad
S^n(E)=\Im\rbigg{\frac1{n!}\sum_{\si\in S_n}\si}\sbe E^{\ts n},
\end{gather}
where $e=\frac1{n!}\sum_{\si\in S_n}\si\in\End(E^{\ts n})$ is an idempotent and $\Im(e)$ is obtained by its splitting.

\begin{remark}
More generally, for any partition $\la\partof n$,
let $V_\la$ be the corresponding simple representation of the group $S_n$.
We define the Schur functor (\cf \cite{deligne_categories})
\begin{equation}
S_\la:\cA\to\cA,\qquad E\mto \Hom_{S_n}(V_\la,E^{\ts n}),
\end{equation}
where we first construct $\Hom(V_\la,E^{\ts n})$ as a direct sum of $\dim V_\la$ copies of $E^{\ts n}$ and then take the $S_n$-invariant subobject of $\Hom(V_\la,E^{\ts n})$ by splitting the idempotent $e=\frac1{n!}\sum_{\si\in S_n}\si$.
In particular, for the trivial representation we obtain
$S^n(E)=S_{(n)}(E)$
and for the alternating representation we obtain
\begin{equation}
\La^nE=S_{(1^n)}(E)=\Im\rbigg{\frac1{n!}\sum_{\si\in S_n}(-1)^{\si}\si}\sbe E^{\ts n}.
\end{equation}
We have $s_\la[E]=[S_\la(E)]$, where $s_\la$ is a symmetric Schur function.
In particular, $\la^n[E]=[\La^n(E)]$.
\end{remark}

\begin{theorem}\label{th:la-MHM}
The \la-ring structure on $\ub K(\MHM(S))$ descends to
the \la-ring structure on $K(\MHM(S))$ with the unit element $1=[\eta_*\bQ]$.
This \la-ring is $\bZ$-graded (see \eqref {gr la-ring}) by weight.
\end{theorem}
\begin{proof}
The above product on $\ub K(\MHM(S))$ descends to $K(\MHM(S))$ as the tensor product is exact.
The \la-structure descends to $K(\MHM(S))$
by \cf \cite[Lemma 2.1]{maxim_twisted} and
\cite[Lemma 4.1]{biglari_rings}.
We have seen in \S\ref{GG1} that $K(\MHM(S))$ is a $\bZ$-graded abelian group.
The product preserves degrees since the tensor product $\ts$ on $\MHM(S)$ is w-exact.
To see that the \la-structure respects the grading (see \eqref{gr la-ring}), we note that if $E\in\MHM(S)$ is pure of weight $m$, then $E^{\ts n}$ is pure of weight $mn$ and so is
$S^n(E)\sbe E^{\ts n}$.
\end{proof}

The monoidal functor defined in \eqref{chi1} induces a morphism of pre-\la-rings
(\cf~\cite{schuermann_characteristic,maxim_twisted})
\begin{equation}\label{chi2}
\hi_c:K(\Sch\qt S)\to K(\MHM(S)),\qquad
[X\xto fS]\mto[f_!\bQ_X].
\end{equation}
We extend the \la-ring structure to $\K(\MHM(S))$ using \eqref{ext la-str}.
Then $\hi_c$ extends to a morphism of pre-\la-rings
$\hi_c:\K(\Sch\qt S)\to \K(\MHM(S))$.

\begin{remark}
Let \cD be a (Karoubian) bounded t-category with a t-exact symmetric monoidal structure (meaning that it preserves the heart $\cA$).
Then the \la-ring structure on $\ub K(\cD)$ descends to the \la-ring structure on the Grothendieck group $K(\cD)$ (with relations induced by distinguished triangles, see \cite[\S4]{biglari_rings}).
The canonical isomorphism $K(\cA)\isoto K(\cD)$ is an isomorphism of \la-rings.
We can apply this statement to $\cD=D^b\MHM(S)$,
where $(S,\mu,\eta)$ is a commutative monoid in the category of complex algebraic varieties with finite maps $\mu,\eta$.
\end{remark}

Note that we use notation $\bL$
both for the class
$\hi_c[\bA^1]=[H_c^*(\bA^1,\bQ)]\in K(\MHM(\pt))$ \eqref{lef2}
and for the class
$[\bA^1]\in K(\Sch)$ \eqref{lef3}
depending on the context.
For a smooth (equidimensional) algebraic variety~$X$ of dimension $d$
(or a smooth Artin stack with affine stabilizers),
we define its virtual class
\begin{equation}\label{virt class}
[X]_\vir=\bL^{-d/2}[X]\in\K(\Sch).
\end{equation}
For example,
\begin{align}
[\bP^n]_\vir&=\bL^{-n/2}(1+\bL+\dots+\bL^n)
=\bL^{-n/2}+\bL^{-n/2+1}+\dots+\bL^{n/2},\\
[\GL_n]_\vir&=\bL^{-n^2/2}\prod_{i=0}^{n-1}(\bL^n-\bL^i)=\bL^{-n/2}\prod_{i=1}^n(\bL^i-1).
\end{align}
Note that $\ICV_X=\bQ_X\ang d=\bL^{-d/2}\bQ_X$, hence the class of
\[
H_c(X,\ICV_X)
=\bL^{-d/2}H_c(X,\bQ)\]
is equal to $\hi_c[X]_\vir$ which we will also denote by $[X]_\vir$ by abuse of notation.
\medskip

Let $\La$ be a commutative monoid admitting a filtration satisfying \eqref{filt} and $\La_1=\Lap=\La\ms\set0$.
Let $S=\bigsqcup_{\ga\in\La}S_\ga$ be a \La-graded commutative monoid in the category of complex algebraic varieties (so that $S_\ga$ are of finite type) such that the product $\mu:S_\ga\xx S_{\ga'}\to S_{\ga+\ga'}$ is finite.
We define
\[K(\MHM(S))=\bop\nolimits_{\ga\in\La}K(\MHM(S_\ga))\]
and equip it with the $\bZ$-graded (and $\La$-graded) \la-ring structure using the same formulas as before.
We extend this \la-ring structure to
$\K(\MHM(S))=\bop\nolimits_{\ga\in\La}\K(\MHM(S_\ga))$
using \eqref{ext la-str}.
There is a commutative diagram
\begin{equation}\label{CD-la-rings1}
\begin{tikzcd}
\K(\Sch\qt S)\rar["\deg_!"]\dar["\hi_c"']&
\K(\Sch\qt \bLa)=\K(\Sch)[\La]\dar["\hi_c"]\\
\K(\MHM(S))\rar["\deg_!"]
& \K(\MHM(\bLa))
=\K(\MHM(\pt))[\La]
\end{tikzcd}
\end{equation}
where $\hi_c$ is defined in \eqref{chi2},
$\deg:S\to\bLa$ is the degree map \eqref{deg map},
and all arrows are homomorphisms of $\La$-graded pre-\la-rings.
These maps induce homomorphisms between completions~\eqref{compl}.

As in \eqref{Exp}, we have the plethystic exponential
\begin{equation}\label{Exp2}
\Exp:\what\K_+(\MHM(S))\to 1+\what\K_+(\MHM(S)),\qquad
a\mto \sum_{n\ge0}\si^n(a).
\end{equation}
In terms of the Adams operations, the $\Exp$ and its inverse (called plethystic logarithm) are
\begin{equation}\label{Exp-Log}
\Exp(a)=\exp\rbigg{\sum_{n\ge1}\frac1n\psi^n(a)},\qquad
\Log(1+a)=\sum_{n\ge1}\frac{\mu(n)}n\psi^n(\log (1+a)),
\end{equation}
where $\mu$ is the M\"obius function.

\section{Moduli spaces of framed objects}
\label{sec:framed}
Let $\M_\ga$ (resp.\ $\Ms_\ga$) denote the moduli space of semistable
(resp.\ stable) vector bundles over a curve $C$ of type $\ga=(r,d)\in\bZ^2$.
The moduli space $\Ms_\ga$ is smooth,
while the moduli space~$\M_\ga$ can have singularities if $r$ and $d$ are not coprime.
On the other hand, we can construct the moduli space $\Mf_\ga$ of stable framed vector bundles,
consisting of pairs $(E,s)$, where $E$ is a vector bundle over $C$ of type $\ga$ and $s\in\Ga(C,E)$ is a section.
Under certain conditions, this moduli space is smooth and
the canonical morphism $\pi:\Mf_\ga\to \M_\ga$ is projective.
We will show that the  fibres of $\pi$ can be identified with moduli spaces of nilpotent quiver representations and we will use this result to prove that the morphism $\pi$ is virtually small
(see Definition~\ref{def:v-small}).
The properties of virtually small maps (see Theorem \ref{th:LT})
will be used later to analyse the degrees of the components of $\pi_*\ICV_{\Mf_\ga}$.
In what follows, we will introduce moduli spaces of framed objects in a more general setting than the case of vector bundles on a curve.

\subsection{Stability of framed objects}
Let $\Vecf$ be the category of finite-dimensional vector spaces over a field $K$.
Let $\cA$ be an abelian $K$-linear category and $\Phi:\cA\to\Vecf$ be a left exact functor, to be called a \idef{framing functor}.

\begin{definition}\label{def:fr}
A \idef{framed object} is a pair $(E,s)$, where $E\in\cA$ and $s\in\Phi(E)$.
We call it \idef{stable} if for every $F\sbe E$ with $s\in\Phi(F)$ we have $F=E$.
\end{definition}

Let $Z=-\sd+\sr\bi:\Ga=K(\cA)\to\bC$ be a stability function on an abelian category $\cA$ (see~\eg~\cite{bridgeland_stability}),
where $\sd,\sr\in\Hom_\bZ(\Ga,\bR)$.
For every $0\ne E\in\cA$, we have either $\sr(E)>0$ or $\sr(E)=0$ and $\sd(E)>0$.
We define the slope of $E$
$$\mu_Z(E)=\frac{\sd(E)}{\sr(E)}\in\bR\sqcup\set{+\infty}$$
and we say that an object $E$ is $Z$-semistable (resp.\ $Z$-stable) if, for any proper $0\ne F\sbs E$, we have $\mu_Z(F)\le\mu_Z(E)$ (resp.\ $\mu_Z(F)<\mu_Z(E)$).
For $\mu\in\bR$, let $\cA^\mu\sbe\cA$ be the subcategory of $Z$-semistable objects having slope $\mu$ (including the zero object).
It is a weakly Serre subcategory of~$\cA$, meaning an abelian subcategory closed under extensions, kernels and cokernels.

It will be convenient to extend the notion of framed objects
$(E,s)$ as follows.
Let $\cA_\f$ be the category consisting of triples $(E,V,s)$, where $E\in\cA$, $V\in\Vecf$ and $s:V\to \Phi(E)$ is linear.
The category $\cA_\f$ is an abelian category,
called the category of framed objects.
A pair $(E,s)$ with $s\in\Phi(E)$ can be identified with the triple $(E,K,s)\in\cA_\f$.
For $\eps>0$, we define the stability function on $\cA_\f$
\[Z_\eps(E,V,s)=Z(E)-\eps\cdot \dim V\]
and the corresponding notion of stability in $\cA_\f$.
A framed object $(E,s)$
is $Z_\eps$-stable for $0<\eps\ll \n{Z(E)}$
(we will say that it is $Z_+$-stable)
if and only if
\begin{enumerate}
\item $E$ is $Z$-semistable.
\item For every proper $F\sbs E$ with $s\in \Phi(F)$, we have $\mu_Z(F)<\mu_Z(E)$.
\end{enumerate}

\begin{lemma}\label{stab in C}
For a framed object $(E,s)$ with $E\in\cC=\cA^\mu$
\tfae
\begin{enumerate}
\item $(E,s)$ is $Z_+$-stable in $\cA_\f$.
\item $(E,s)$ is $Z_+$-stable in $\cC_\f$.
\item For every $F\sbe E$ in $\cC$ with $s\in \Phi(F)$, we have $F=E$.
\end{enumerate}
\end{lemma}
\begin{proof}
If $(E,s)$ is $Z_+$-stable in $\cA_\f$, then it is automatically $Z_+$-stable in $\cC_\f$.
If $(E,s)$ is $Z_+$-stable in $\cC_\f$ and $s\in\Phi(F)$ for a proper $F\sbs E$ in $\cC$, then $\mu_Z(F)<\mu_Z(E)$.
This contradicts to $F\in\cC=\cA^\mu$.
Assume that $(E,s)$ satisfies the third condition.
Let $s\in\Phi(F)$ for a proper $F\sbs E$ in $\cA$.
As $E\in\cA^\mu$, we have $\mu_Z(F)\le\mu$.
If $\mu_Z(F)=\mu$, then $F\in\cA^\mu=\cC$ and by our assumption $F=E$.
Otherwise $\mu_Z(F)<\mu_Z(E)$.
This implies that $(E,s)$ is $Z_+$-stable in $\cA_\f$.
\end{proof}

The above result implies that the notion of $Z_+$-stability of framed objects in $\cA^\mu$ is equivalent to the notion of stability of framed objects from Definition \ref{def:fr}.
To define such a framed object, it is enough to have
a left exact functor $\Phi:\cA^\mu\to\Vecf$.
Usually, this functor will be exact.

\begin{example}
\label{ex:q-rep}
Let $Q$ be a quiver and $\cA=\Rep Q$ be the category of $Q$-representations over a field~$K$.
For a fixed (framing) vector $\bw\in\bN^{Q_0}$, we define
the exact functor
\begin{equation}
\Phi:\cA\to\Vecf,\qquad M\mto\bop_{i\in Q_0}\Hom(K^{\bw_i},M_i)
\iso\bop_{i\in Q_0}M_i^{\oplus \bw_i}.
\end{equation}
The category $\cA_\f$ of framed objects can be identified with the category of representations of the new quiver $Q'$ obtained from $Q$ by adding a new vertex $*$ and $\bw_i$ arrows $*\to i$ for all $i\in Q_0$.
An object $(M,V,s)\in\cA_\f$ is identified with the representation $M'\in\Rep Q'$ such that $M'_i=M_i$ for $i\in Q_0$ and $M'_*=V$.
The linear map $s:V\to\Phi(M)=\bop_{i\in Q_0}M_i^{\oplus \bw_i}$ induces linear maps $M'_*\to M'_i$ corresponding to the arrows $*\to i$ in $Q'$.
A framed object $(M,s)$ corresponds to $M'\in\Rep Q'$ with $M'_*=K$.
It is stable if and only if $M'_*$ generates the whole representation.
\end{example}

\begin{example}\label{ex:VB}
Let $C$ be a smooth projective curve of genus $g$
and $\cA=\Coh C$ be the category of coherent sheaves over $C$.
We consider the stability function $Z(E)=-\deg E+\rk E\cdot \bi$
on~$\cA$.
For $\mu\in\bQ$, let $\cA^\mu=\Coh^\mu(C)\sbs\Coh(C)$ be the category of semistable vector bundles of slope~$\mu$.
For a fixed line bundle $L$ (or any coherent sheaf) over $C$,
we consider the left exact functor
\begin{equation}
\Phi:\cA\to\Vecf,\qquad E\mto \Hom(L,E).
\end{equation}
If the line bundle~$L$ has degree $\ell<\mu-(2g-2)$, then the functor
$\Phi:\cA^\mu\to\Vecf$ is exact.
Indeed, $\Ext^1(L,E)\iso \Hom(E,L\ts\om_X)^*=0$ for $E\in\cA^\mu$ since
$\mu>\ell+2g-2$.

On the other hand, for a fixed point $p\in C$,
the fibre functor
\begin{equation}\label{parab}
\Phi:\cA^\mu\to\Vecf,\qquad E\mto E(p)=E_p\ts_{\cO_{C,p}}K,
\end{equation}
is also exact.
Framed objects in this situation can be identified with parabolic vector bundles.
\end{example}

As before, let $\cA^\mu\sbe\cA$ be the category of $Z$-semistable objects of \cA having slope $\mu\in\bR$.
The stable objects of $\cA$ having slope $\mu$ are exactly the simple objects of $\cA^\mu$.
Let
\[E_1,\dots,E_n\in\cA^\mu\]
be a collection of (pairwise non-isomorphic) simple objects
and let $\cC=\ang{E_1,\dots,E_n}\sbs\cA^\mu$
be the abelian category generated by them.
It is a Serre subcategory of $\cA^\mu$, meaning an abelian subcategory closed under taking extensions, subobjects and quotients.
Given a left exact functor $\Phi:\cA^\mu\to\Vecf$,
we define the category $\cC_\f$ of framed objects in $\cC$ in the same way as before.

We are going to describe the categories $\cC$ and $\cC_\f$ as categories of quiver representations.
Let~$Q$ be the quiver with vertices $1,\dots,n$ and the number of arrows from $i$ to $j$ equal
\begin{equation}\label{Q1}
a_{ij}=\dim\Ext^1(E_i,E_j).
\end{equation}
Let $Q'$ be the quiver obtained from $Q$ by adding a new vertex $*$ and $\bw_i=\dim \Phi(E_i)$ arrows $*\to i$ for all $i\in Q_0$.
A representation of $Q$ is called nilpotent if it has a filtration by sub-representations such that the arrows of $Q$ act trivially on the factors.

\begin{theorem}
\label{rep nil}
\mbox{}
\begin{enumerate}
\item If $\cA^\mu$ is hereditary,
then the category $\cC$ is equivalent to the category $\Rep^\nil(Q)$ of nilpotent representations of $Q$.
\item If $\cA^\mu$ is hereditary and $\Phi:\cA^\mu\to\Vecf$ is exact, then the category of framed objects $\cC_\f$ is equivalent to $\Rep^\nil(Q')$.
\end{enumerate}
\end{theorem}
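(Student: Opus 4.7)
Both parts are instances of a single structure theorem: any $\bk$-linear hereditary abelian category $\cD$ of finite length with finitely many simples $S_1,\dots,S_N$ satisfying $\End_\cD(S_i)=\bk$ and $\Hom_\cD(S_i,S_j)=0$ for $i\ne j$ is equivalent to $\Rep^\nil(Q_\cD)$, where $Q_\cD$ has $\dim_\bk\Ext^1_\cD(S_i,S_j)$ arrows $i\to j$. My plan is to establish this structure theorem once and then apply it twice. After choosing a basis $\{\xi^a_{ij}\}$ of each $\Ext^1_\cD(S_i,S_j)$, I would build a functor $R\colon\Rep^\nil(Q_\cD)\to\cD$ by induction on the (nilpotent) radical filtration of a representation: the datum of a representation along the arrows $a\colon i\to j$ dictates which Yoneda class $\xi^a_{ij}$ to glue at each step. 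Hereditarity ensures that $\Ext^1$ classifies all extensions; nilpotency makes the induction terminate in finitely many steps. Essential surjectivity is forced by the generation of $\cD$ by $\{S_i\}$ under extensions, and fullness, faithfulness and exactness follow from a standard d\'evissage argument using the matching $\Ext^1$-groups. Equivalently, one may realise $\cD$ as finite-length modules over the completed path algebra $\widehat{\bk Q_\cD}$ via pro-objects pro-representing $\Hom_\cD(-,S_i)$.

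For part (1), the $E_i$ are stable of a common slope $\ta$, hence are simple objects of $\cA^\ta$, and Schur's lemma gives $\End(E_i)=\bk$ and $\Hom(E_i,E_j)=0$ for $i\ne j$. By construction $\cC$ is the Serre subcategory of $\cA^\ta$ of objects whose Jordan--H\"older factors all lie in $\{E_i\}$, so $\cC$ is of finite length with exactly these simples. Since $\cA^\ta$ is closed under extensions in $\cA$, any extension of $E_i$ by $E_j$ in $\cA$ lies in $\cA^\ta$ and hence in $\cC$, so $\Ext^1_\cC(E_i,E_j)=\Ext^1_\cA(E_i,E_j)$; higher Ext's vanish by hereditarity of $\cA^\ta$. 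Thus the Ext-quiver of $\cC$ is $Q$ and the structure theorem concludes the argument.

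For part (2), a direct inspection of the mapping-cylinder description of $\cC_f$ identifies its simples as $(E_i,0,0)$ for $i=1,\dots,n$ together with $(0,\bk,0)$, each with endomorphism ring $\bk$ and pairwise vanishing Homs. A short calculation of extension classes yields
\[
\Ext^1_{\cC_f}\bigl((E_i,0,0),(E_j,0,0)\bigr)=\Ext^1_\cC(E_i,E_j),\qquad \Ext^1_{\cC_f}\bigl((0,\bk,0),(E_i,0,0)\bigr)\iso\Phi(E_i),
\]
while the two other cross-Ext's vanish. Hereditarity of $\cC_f$ follows from hereditarity of $\cC$ and $\Vect$ combined with exactness of $\Phi$, via the long exact Ext-sequence attached to the forgetful functor $\cC_f\to\cC\times\Vect$ --- this is precisely the point where the exactness hypothesis on $\Phi$ is used. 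The Ext-quiver of $\cC_f$ is therefore $Q'$, and the structure theorem again finishes the proof.

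The main obstacle is the structure theorem itself, and inside it the bookkeeping needed to ensure that the inductively constructed functor $R$ is natural, i.e.\ independent up to natural isomorphism of the chosen Yoneda representatives. Everything else is essentially combinatorial, the only non-formal input in part (2) being that exactness of $\Phi$ is exactly what propagates hereditarity into the mapping cylinder.
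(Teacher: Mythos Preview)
Your proposal is correct, but for part (2) it takes a genuinely different route from the paper. The paper does not reapply the structure theorem to $\cC_f$. Instead, having identified $\cC$ with $\Rep^\nil(Q)$, it proves that any exact functor $\Phi:\cC\to\Vect$ is determined, up to (non-unique) natural isomorphism, by its values on the simples: working on each truncation $\cC_t=\Rep(\bk Q/\fr^{t+1})$, it exhibits an explicit projective $P=\bigoplus_i\Phi(S_i)^*\otimes P_i$ and shows $\Phi\cong\Hom(P,-)$. Since $\cC_f$ and $\Rep^\nil(Q')$ are both mapping cylinders of exact functors with the same values on simples, they must be equivalent. In particular the paper obtains hereditarity of $\cC_f$ as a \emph{consequence} of the equivalence (see the Remark following the theorem), whereas you need it as an \emph{input}. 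Your approach is more uniform and conceptually cleaner; the paper's is more direct and avoids the separate verification of hereditarity.

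One point to tighten: the phrase ``long exact Ext-sequence attached to the forgetful functor $\cC_f\to\cC\times\Vect$'' is imprecise, since that functor has no evident adjoint. What actually works is the canonical short exact sequence $0\to(E',0,0)\to X'\to(0,V',0)\to 0$ in $\cC_f$. From $p_2\dashv\iota_2$ with $p_2$ exact one sees that $(0,V',0)$ is injective, and from $\iota_1\dashv p_1$ with both exact and $\iota_1$ fully faithful one gets $\Ext^n_{\cC_f}(\iota_1E,\iota_1E')\cong\Ext^n_\cC(E,E')$. This reduces hereditarity to $\Ext^2_{\cC_f}\bigl((0,\bk,0),(E',0,0)\bigr)=0$, and here exactness of $\Phi$ enters exactly as you say: for any short exact sequence $0\to E'\to G\to F\to 0$ in $\cC$ the induced map $\Phi(G)\to\Phi(F)$ is onto, which is precisely the vanishing of the relevant connecting homomorphism.
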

\begin{proof}
For the first statement see \cite[\S1.5]{deng_quiver}.
For the second statement we can assume that $\cC=\Rep^\nil(Q)$ and we will show that an exact functor $\Phi:\cC\to\Vecf$ is uniquely determined (up to a non-unique natural transformation) by its values on the simple objects.
This will imply the second statement as $\Rep^\nil(Q')$ can be identified with the category of framed objects in $\Rep^\nil(Q)$,
see Example \ref{ex:q-rep}.

Let $A=\bk Q$ be the path algebra,
$\fr\sbs A$ be the radical,
and $e_i\in A$ be the idempotent corresponding to the trivial path at a vertex $i\in Q_0$.
It is enough to show that $\Phi$ is uniquely determined on the subcategory $\cC_t=\Rep A_t\sbs\cC$ (which is not an exact subcategory), where $A_t=A/\fr^{t+1}$ for $t\ge0$.
Let $S_i$ be the simple modules and $P_i=A_te_i$ their projective covers as $A_t$-modules.
For the projective module
$P=\bop_i \Phi(S_i)^*\ts_\bk P_i$,
we consider the functor
$h_P=\Hom(P,-):\cC\to\Vecf$.
We will construct an isomorphism $a:h_P\to \Phi$ of functors over $\cC_t$.
The map
$$\Fun(h_P,\Phi)\iso \Phi(P)
=\bop_i \Phi(S_i)^*\ts_\bk \Phi(P_i)
\to\bop_i \Phi(S_i)^*\ts_\bk \Phi(S_i)
$$
is surjective as $P_i\to S_i$ is surjective and $\Phi$ is exact.
Consider a preimage $a:h_P\to \Phi$ of the sum of identity maps.
It corresponds to $a_i:\Phi(S_i)\to \Phi(P_i)$ such that the composition $\Phi(S_i)\to \Phi(P_i)\to \Phi(S_i)$ is the identity for all $i$.
It induces a chain of maps
\begin{multline*}
h_P(M)=\Hom(P,M)=\bop_i \Phi(S_i)\ts_\bk \Hom(P_i,M)\\
\to\bop_i \Phi(P_i)\ts_\bk \Hom(P_i,M)\to \Phi(M).
\end{multline*}
For $M=S_j$, the composition of the last two arrows is an isomorphism. This implies that $a:h_P\to \Phi$ induces an isomorphism on simple objects.
By exactness of $h_P$ and $\Phi$ on $\cC_t$, we conclude that $a:h_P\to \Phi$ is an isomorphism of functors on the whole category $\cC_t$.
\end{proof}

The above theorem implies that the moduli space of stable pairs $(E,s)$ with $E\in\cC$ can be identified with the moduli spaces of stable nilpotent representations of $Q'$ having dimension one at the vertex $*$ (see Example \ref{ex:q-rep}).
Stability of a representation $M\in\Rep Q'$ means
that $M_*$ generates the whole representation.

\begin{remark}
The second statement of the theorem implies that $\cC_\f$ is hereditary.
More generally, one can show that if $\cA$ is hereditary and $\Phi:\cA\to\Vecf$ is exact, then $\cA_\f$ is hereditary
\cite{mozgovoy_quiver}.
\end{remark}

\begin{example}
Let $S$ be a smooth projective surface and $H$ be an ample divisor such that $H\cdot K_S<0$, where $K_S$ is the canonical divisor of $S$.
Define the slope function
$$\mu(E)=\frac{H\cdot c_1(E)}{\rk E},\qquad E\in\Coh S.$$
For $\mu\in\bR$, let $\cA^\mu=\Coh^\mu S\sbs\Coh S$ be the category of semistable vector bundles (also called Mumford semistable) having slope $\mu$.
This category is hereditary.
Indeed, for any $E,F\in\cA^\mu$, we have
$\Ext^2(E,F)\iso\Hom(F,E\ts\om_S)^*=0$
as both $F$ and $E\ts\om_S$ are semistable and
$$\mu(E\ts\om_S)=\frac{H\cdot c_1(E)+\rk E\cdot H\cdot K_S}{\rk E}=\mu(E)+H\cdot K_S<\mu(F).$$
For $n\in\bZ$, we consider the left exact functor
\begin{equation}
\Phi_n:\cA^\mu\to\Vecf,\qquad E\mto \Hom(\cO(-nH),E).
\end{equation}
This functor is not exact in general,
but one can prove the following (\cf \cite[\S1.7, \S3.3]{huybrechts_geometry}).
For $E\in\Coh S$, let $\cl (E)=(\rk E,H\cdot c_1(E),c_2(E))\in\bZ^3$.
For any finite subset $C\sbs\bZ^3$, there exists $n>0$ such that
$R^i\Phi_n(E)=0$ for all $i>0$ and $E\in\cA^\mu$ with $\cl(E)\in C$.
\end{example}

\subsection{Moduli spaces}
\label{sec:MS1}
Let $\cA$ be an abelian ($\bC$-linear) category and $\cl:K(\cA)\to\Ga$ be a homomorphism to an abelian group $\Ga$ equipped with a bilinear form $\hi$ compatible with the Euler form on $\cA$
\begin{equation}
\hi(\cl E,\cl E')=\hi(E,E')=\sum_{i\ge0}(-1)^i\dim\Ext^i(E,E').
\end{equation}
Let $Z=-\sd+\sr\bi:\Ga\to\bC$ be a homomorphism inducing a stability function $K(\cA)\xto{\cl}\Ga\xto Z\bC$ (also denoted by $Z$).
For $\mu\in\bR$, let $\cA^\mu\sbe\cA$ be the category of $Z$-semistable objects having slope~$\mu$ (including the zero object).
We define the semigroups
\begin{equation}\label{La1}
\La=\sets{\cl(E)\in\Ga}{E\in\cA^\mu},\qquad
\Lap=\La\ms\set{0}.
\end{equation}
We assume that $\cA^\mu$ is hereditary and we have an exact functor
$\Phi:\cA^\mu\to\Vecf$
and a homomorphism $\vi:\La\to\bZ$ such that
\begin{equation}
\dim\Phi(E)=\vi(\cl E)\qquad\forall E\in\cA^\mu.
\end{equation}

\begin{example}
\label{ex:VB2}
As in Example \ref{ex:VB}, let $C$ be a smooth projective curve
of genus~$g$ over $\bC$
and let $\cA=\Coh C$ and $\cA^\mu=\Coh^\mu C\sbs\Coh C$
for a fixed $\mu\in\bQ$.
We consider the Chern character map $\ch:K(\cA)\to\Ga=\bZ^2$, $[E]\mto(\rk E,\deg E)$.
Let $\Phi=\Hom(L,-):\cA^\mu\to\Vecf$ for a line bundle $L$ of degree $\ell<\mu-(2g-2)$.
For $\ga=\ch(E)=(r,d)$ and $\ga'=\ch E'=(r',d')$, we have
\begin{gather}
\hi(\ga,\ga')=\hi(E,E')=(1-g)rr'+(rd'-r'd),\label{euler1}\\
\vi(\ga)=\hi(L,E)=(1-g-\ell)r+d.
\end{gather}
Note that the Euler form is symmetric on $\La$.
Indeed, if $\ga,\ga'\in\La$, then $rd'=r'd$, hence
$\hi(\ga,\ga')=(1-g)rr'=\hi(\ga',\ga)$.
\end{example}

We assume that for every $\ga\in\La$ there exists a moduli space $\M_\ga$ of semistable objects in $\cA$ having class $\ga$
and a moduli space $\Mf_\ga$ of stable framed objects $(E,s)$ with $E$ having class $\ga$
(see \eg \cite{huybrechts_framed} in the case of framed objects on a curve).
The canonical projection $\pi:\Mf_\ga\to\M_\ga$ is a projective map.
The moduli space $\Mf_\ga$ is smooth as all objects are stable and the second \Ext-group vanishes (see \eg \cite{thaddeus_stable}).
The moduli space $\M_\ga$ parametrizes poly-stable objects of the form
$$E=\bop_{i=1}^n E_i^{m_i},$$
where $E_i$ are pairwise non-isomorphic stable objects in $\cA$
(simple objects in $\cA^\mu$)
with $\ga_i=\cl E_i\in\Lap$ such that $\ga=\sum_i m_i\ga_i$.
We define the type of $E$ to be the pair $(\bga,\m)$ with
$$\bga=(\ga_i)_{i=1}^n\in(\Lap)^n,\qquad
\qquad  \m=(m_i)_{i=1}^n\in\bN^n,$$
The set $S_{\bga,\m}\sbs \M_\ga$ of all objects having type $(\bga,\m)$ is called the Luna stratum of type $(\bga,\m)$.
In our applications they form a finite partition of $\M_\ga$ into locally closed subsets.
If $n=1$ and $m_1=1$, then $S_{\bga,\m}$ is equal to the stable locus $\Ms_\ga\sbe\M_\ga$ which is smooth and open.
The fibre of $\pi$ over $E\in\Ms$ is isomorphic to the projective space
$P(\Phi(E))\iso\bP^{\vi(\ga)-1}$.

Following \eqref{Q1},
we define the quiver $Q_{\bga}$ with vertices $1,\dots,n$ and the number of arrows from $i$ to $j$ equal
\begin{equation}\label{fiber quiver}
a_{ij}=\dim\Ext^1(E_i,E_j)=\de_{ij}-\hi(\ga_i,\ga_j).
\end{equation}
We define $Q'_{\bga}$ to be the quiver obtained from $Q_{\bga}$ by adding one vertex $*$ and adding
\begin{equation}
\bw_i=\dim\Phi(E_i)=\vi(\ga_i)
\end{equation}
arrows $*\to i$ for all vertices $i$ in $Q_{\bga}$.

\begin{remark}\label{VB symmetric}
In the case of vector bundles on a curve and $\ga_i=\ch E_i=(r_i,d_i)$ with $d_i/r_i=\mu$, we obtain
$a_{ij}=\de_{ij}+(g-1)r_ir_j$.
Note that the corresponding quiver is symmetric ($a_{ij}=a_{ji}$).
\end{remark}
The Euler form of $Q_{\bga}$ is given by
$$\hi_{Q_{\bga}}(\m,\m')=\sum_{i}m_im'_i-\sum_{i,j}a_{ij}m_im'_j
=\sum_{i,j}(\de_{ij}-a_{ij})m_im'_j
,\qquad \m,\m'\in\bZ^n.$$
If $\ga=\sum_i m_i\ga_i$ and $\ga'=\sum_i m'_i\ga_i$, then
\[\hi(\ga,\ga')=\sum_{i,j}\hi(\ga_i,\ga_j)m_im'_j
=\hi_{Q_{\bga}}(\m,\m').\]
This means that the map $\bZ^n\to\Ga$, $\m\mto\sum_i m_i\ga_i$ preserves the Euler forms.

\begin{theorem}
\label{th:fiber}
For any object $E\in \M_\ga$ of type $(\bga,\m)$,
the fibre $\pi\inv(E)\sbs \Mf_\ga$ is isomorphic to the moduli space
$\M^{\f,\nil}_\m$ of stable nilpotent representations $M$ of $Q'_{\bga}$ with $M|_{Q_{\bga}}$ having dimension vector $\m$ and $\dim M_*=1$.
Stability of $M$ means that $M_*$ generates the whole representation.
\end{theorem}
\begin{proof}
Let $E=\bop_{i=1}^n E_i^{m_i}$, where $\cl E_i=\ga_i\in\Lap$.
Let $\cC\sbs\cA^\mu$ be the abelian category generated by $E_1,\dots,E_n$.
We proved in Theorem \ref{rep nil} that $\cC$ is equivalent to the category $\Rep^\nil(Q_{\bga})$ and $\cC_\f$ is equivalent to the category $\Rep^\nil(Q'_{\bga})$.

If $(E',s)\in\pi\inv(E)$, then the factors of the Jordan-H\"older filtration of $E'\in\cA^\mu$ contain $m_i$ copies of $E_i$ for all $1\le i\le n$.
Therefore $E'\in\cC$ and $(E',s)$ is stable in $\cC_\f$
(\cf Lemma \ref{stab in C}).
As $\cC_\f\iso\Rep^\nil(Q'_{\bga})$,
we can identify $(E',s)$ with a stable nilpotent representation $M$ of $Q'_{\bga}$ such that $M|_{Q_{\bga}}$ has dimension vector $\m$ and $\dim M_*=1$.
\end{proof}

\subsection{Virtual smallness}
Let us assume now that the Euler form is symmetric on $\La$.

\begin{theorem}
\label{framed-semism}
The defect of $\pi:\Mf_\ga\to \M_\ga$ over $S_{\bga,\m}$ is
$\le \vi(\ga)-1$,
with the equality only over the stable locus $\Ms_\ga\sbe \M_\ga$
(if $\Ms_\ga\ne\es$).
\end{theorem}
\begin{proof}
Let $S_{\bga,\m}\sbe \M_\ga$ be the stratum corresponding
to $\bga=(\ga_1,\dots,\ga_n)\in\Ga^n$
and $\m=(m_1,\dots,m_n)\in\bN^n$ such that $\sum_i m_i\ga_i=\ga$.
Let $Q=Q_{\bga}$ and $Q'=Q'_{\bga}$ be as in \eqref{fiber quiver}.
By our assumptions, the quiver $Q$ is symmetric (\cf Remark \ref{VB symmetric}).
By Theorem \ref{th:fiber},
for $E=\bop_{i=1}^nE_i^{m_i}\in S_{\bga,\m}$,
we can identify the fibre $\pi\inv(E)$
with the moduli space $\M^{\f,\nil}_\m$
of stable nilpotent representations of $Q'$.
Let $R_\m=\bop_{a:i\to j}\Hom(\bC^{m_i},\bC^{m_j})$
be the space of all representations of $Q$ having dimension vector $\m$ and let $R^\nil_\m\sbs R_\m$ be the subscheme of nilpotent representations.
These schemes are equipped with the action of the group $G_\m=\prod_{i\in Q_0}\GL_{m_i}$.
As $Q$ is a symmetric quiver, we have
(see \cite{meinhardt_donaldson})
$$\toh\hi_Q(\m,\m)+\dim R^\nil_\m-\dim G_\m\le \sum_{i=1}^nm_i\rbr{\toh\hi_Q(e_i,e_i)-1},$$
where $e_i$ is the standard basis vector of $\bZ^{n}$.
We have
$$\dim\M^{\f,\nil}_\m
=\dim R^\nil_\m(Q)-\dim G_\m+\bw\cdot \m
$$
as $G_\m$ acts freely on stable framed representations.
Note that $\bw\cdot\m=\vi(\ga)$.
Therefore
\[\dim \M_\m^{\f,\nil}\le
\sum_{i=1}^nm_i\rbr{\toh\hi(\ga_i,\ga_i)-1}
-\toh\hi(\ga,\ga)+\vi(\ga).
\]
The dimension of the stable locus $\Ms_\ga\sbe\M_\ga$ is $1-\hi(\ga,\ga)$, hence
$\dim S_{\bga,\m}=\sum_{i=1}^n(1-\hi(\ga_i,\ga_i))$ (if $S_{\bga,\m}$ is non-empty).
The defect of $\pi$ over $S_{\bga,\m}$ is equal (see \eqref{def1})
\begin{multline}\label{long ineq}
\de(\pi,S_{\bga,\m})
=2\dim \M_\m^{\f,\nil}+\dim S_{\bga,\m}-\dim\Mf_\ga\\
=2\dim \M_\m^{\f,\nil}
+\sum_{i=1}^n(1-\hi(\ga_i,\ga_i))
+\rbr{\hi(\ga,\ga)-\vi(\ga)}\\
\le\sum_{i=1}^nm_i\rbr{\hi(\ga_i,\ga_i)-2}
+\sum_{i=1}^n(1-\hi(\ga_i,\ga_i))+\vi(\ga)\\
=\sum_{i=1}^n(m_i-1)\rbr{\hi(\ga_i,\ga_i)-1}
-\sum_{i=1}^nm_i+\vi(\ga)
\le -\sum_{i=1}^nm_i+\vi(\ga)\le \vi(\ga)-1,
\end{multline}
where we used the fact that $m_i\ge1$ and $\hi(\ga_i,\ga_i)\le1$.
The last inequality in \eqref{long ineq} can be an equality only for $n=1$ and $m_1=1$.
In this case the stratum $S_{\bga,\m}$ is the stable locus $\Ms_\ga$.
The fibre over $E\in \Ms_\ga$ can be identified with the projective space $P(\Phi(E))\iso\bP^{\vi(\ga)-1}$.
Therefore the defect of $\pi$ over $\Ms_\ga$ is
\[\de(\pi,\Ms_\ga)
=2(\vi(\ga)-1)+(1-\hi(\ga,\ga))+(\hi(\ga,\ga)-\vi(\ga))
=\vi(\ga)-1.
\]
\end{proof}

\section{DT invariants}
\label{sec:main}

\subsection{Moduli spaces}
We follow conventions from \S\ref{sec:MS1}.
Let $C$ be a smooth projective curve of genus $g$ over $\bC$.
Let $\cA=\Coh C$ and $\cA^\mu=\Coh^\mu C\sbe \Coh C$ be the category of semistable vector bundles having a fixed slope $\mu\in\bQ$.
Similarly to \eqref{La1}, we define
\[\Lap=\sets{(r,d)\in\bZ_{>0}\xx\bZ}{d/r=\mu},\qquad \La=\Lap\sqcup\set{0}.\]
Note that $\La=\bN\ga_0\iso\bN$, for a unique element $\ga_0=(r_0,d_0)\in\Lap$.

For $\ga=(r,d)\in\La$, let $\M_\ga$ (resp.\ $\Ms_\ga$) be the moduli space of semistable (resp.\ stable) vector bundles over $C$ having class $\ga$ (rank $r$ and degree $d$).
Similarly, let $\fM_\ga$ be the moduli stack of semistable vector bundles over~$C$ having class $\ga$.

\begin{remark}
The moduli space $\M_\ga$ is always irreducible
\lep[Theorem 8.5.2].
If the stable locus $\Ms_\ga\sbe\M_\ga$ is nonempty,
then it is smooth of dimension $1-\hi(\ga,\ga)=(g-1)r^2+1$.
Moreover,
\begin{enumerate}
\item
If $g\ge 2$, then $\Ms_\ga\ne\es$ \lep[Theorem 8.6.1].
\item
If $g=1$, then $\M_\ga\iso S^k(C)$
for $k=\gcd(r,d)$ \lep[Theorem 8.6.2].
We have $\Ms_\ga=\M_\ga$ if $k=1$ and $\Ms_\ga=\es$ otherwise.
Indeed, if $\Ms_\ga\ne\es$, then $\dim S^k(C)=1$, hence $k=1$.

\item
If $g=0$, then $\M_\ga=\pt$ if $d/r\in\bZ$ and $\M_\ga=\es$ otherwise.
We have $\Ms_\ga=\M_\ga$ if $r=1$ and $\Ms_\ga=\es$ otherwise.
\end{enumerate}
\end{remark}

\begin{remark}
Note that the restriction of the Euler form \eqref{euler1} to $\La$ (or $\cA^\mu$) is symmetric.
All of the results below translate verbatim to moduli spaces in other hereditary categories under the assumption that the restriction of the Euler form to $\cA^\mu$ is symmetric.
\end{remark}

As in Example \ref{ex:VB2}, we fix a line bundle $L$ of degree $\ell<\mu-(2g-2)$ and consider the exact functor
$\Phi=\Hom(L,-):\cA^\mu\to\Vecf$
and the homomorphism $\vi:\La\to\bZ$
such that $\dim\Phi(E)=\vi(\ch E)$ for all $E\in\cA^\mu$.
Let $\Mf_\ga$ be the moduli space of stable framed objects $(E,s)$
with $E\in\cA^\mu$ having class $\ga$ and $s\in\Phi(E)$.
We define
\[\M=\bigsqcup_{\ga\in\La}\M_\ga,\qquad
\fM=\bigsqcup_{\ga\in\La}\fM_\ga,\qquad
\Mf=\bigsqcup_{\ga\in\La}\Mf_\ga,\]
and consider the commutative diagram
\[\begin{tikzcd}
\Mf\ar[rr,"q"]\ar[rd,"\pi"']&& \MM\ar[dl,"p"]\\
&\M
\end{tikzcd}\]

The moduli space $\M$ is a $\La$-graded commutative monoid in the category of algebraic schemes over $\bC$,
where the product map $\mu:\cM\xx\cM\to \cM$ is given by the direct sum of vector bundles and is a finite map.
The unit map is $\eta:\pt=\M_0\emb \M$.
By the results of \S\ref{sec:monoids}, we have a $\La$-graded pre-\la-ring $\K(\Sch\qt\M)$
and a $\La$-graded (and $\bZ$-graded) \la-ring $\K(\MHM(\M))$.
As in \eqref{CD-la-rings1}, we  have a commutative diagram
\[\begin{tikzcd}
\K(\Sch\qt\M)\rar["\deg_!"]\dar["\hi_c"']&\K(\Sch\qt\bLa)=\K(\Sch)[\La]\dar["\hi_c"]\\
\K(\MHM(\M))\rar["\deg_!"]&\K(\MHM(\bLa))=\K(\MHM(\pt))[\La]\rar["E"]
&\bQ(u^\oh,v^\oh)[\La]
\end{tikzcd}\]
where $\deg:\M\to\bLa$ is the degree map \eqref{deg map},
$E$ is the $E$-polynomial map (see \S\ref{sec:HD}),
and all arrows are homomorphisms of pre-\la-rings.

\subsection{Motivic DT invariants}
The (absolute) motivic DT invariants
$\aDTm_\ga\in \K(\Sch)$ are defined by the formula in $\what\K(\Sch\qt\bLa)\iso\prod_{\ga\in\La}\K(\Sch)t^\ga$
\[\sum_{\ga\in\La}\bL^{\oh\hi(\ga,\ga)}[\MM_\ga]t^\ga
=\Exp\rbigg{\frac{\sum_{\ga\in\Lap}\aDTm_\ga t^\ga}{\Gv}}.\]
Note that $\dim\MM_\ga=-\hi(\ga,\ga)$,
hence $\bL^{\oh\hi(\ga,\ga)}[\MM_\ga]=[\MM_\ga]_\vir$
\eqref{virt class}.
Note also that $\Gv=[\Gm]_\vir$.
Similarly, we define the (relative) motivic DT invariants
$\DTm_\ga\in \K(\Sch\qt\M_\ga)$
by the formula in $\what\K(\Sch\qt\M)$
\begin{equation}\label{mot DT}
\sum_{\ga\in\La}\bL^{\oh\hi(\ga,\ga)}[\MM_\ga\to\M_\ga]
=\Exp\rbigg{\frac{\sum_{\ga\in\Lap}\DTm_\ga}{\Gv}}.
\end{equation}
We define
$\DTm=\sum_{\ga\in\Lap}\DTm_\ga\in\what\K(\Sch\qt\M)$.

\begin{theorem}[Integrality conjecture \cite{meinhardt_donaldsona}]
\label{th:int}
The class $\DTm_\ga$ is contained in the image of the map
$K(\Sch\qt\M_\ga)[\bL^{-1/2}]\to \K(\Sch\qt\M_\ga).$
\end{theorem}

\subsection{Mixed DT invariants}
The (absolute) DT invariants are defined by
\begin{equation}
\aDT_\ga=\hi_c(\aDTm_\ga)\in \K(\MHS^p),
\end{equation}
where $\hi_c:\K(\Sch)\to\K(\MHS^p)$
was defined in \eqref{chi2}.
Similarly, the relative DT invariants are
\begin{equation}
\DT_\ga=\hi_c(\DTm_\ga)\in \K(\MHM(\M_\ga)).
\end{equation}
Using the fact that
$\hi_c:\K(\Sch\qt\bLa)\to\K(\MHM(\bLa))$
and $\hi_c:\K(\Sch\qt\M)\to\K(\MHM(\M))$ are morphisms of pre-\la-rings, we can write
\begin{gather}
\sum_{\ga\in\La}\bL^{\oh\hi(\ga,\ga)}\hi_c[\MM_\ga]
=\Exp\rbigg{\frac{\sum_{\ga\in\Lap}\aDT_\ga}{\Gv}}\label{aDT2}\\
\sum_{\ga\in\La}\bL^{\oh\hi(\ga,\ga)}\hi_c[\MM_\ga\to\M_\ga]
=\Exp\rbigg{\frac{\sum_{\ga\in\Lap}\DT_\ga}{\Gv}}.
\end{gather}

\begin{corollary}
\label{cor:int}
The class $\DT_\ga$ is contained in $K(\MHM(\M_\ga))[\bL^{1/2}]$.
\end{corollary}
\begin{proof}
By Theorem \ref{th:int}, the class $\DT_\ga$ is contained in the image of the map (note that $\bL$ is invertible)
$K(\MHM(\M_\ga))[\bL^{1/2}]\to \K(\MHM(\M_\ga))$.
This map is injective as $K(\MHM(\M_\ga))$ is free over $\bZ[\bL^{\pm1}]$ (the basis consists of isomorphism classes of simple objects in $\MHM(\cM_\ga)$ having weight $0$ or $1$).
\end{proof}

\begin{remark}
The stack $\fM_\ga$ can be represented as a global quotient $X/G$, where $X$ is smooth and $G=\GL_n$ is a general linear group
(see \eg \cite{huybrechts_geometry}).
If $q:X\to Y$ is a principal $G$-bundle between smooth algebraic varieties, then $[\ICV_Y]=[G]_\vir\inv \cdot q_![\ICV_X]$.
In our case, we consider the diagram
\[\begin{tikzcd}
X\ar[rr,"q"]\ar[rd]&& \MM_\ga\ar[dl,"p"]\\
&\M_\ga
\end{tikzcd}\]
and formally define
$$p_![\ICV_{\fM_\ga}]=[G]_\vir\inv\cdot(pq)_![\ICV_X]
\in \K(\MHM(M_\ga)).$$
Then
$p_![\ICV_{\fM_\ga}]
=\bL^{-d_X/2+d_G/2}[G]\inv\cdot (pq)_![\bQ_X]
=\bL^{\oh\hi(\ga,\ga)}\cdot \hi_c[\MM_\ga\to\M_\ga]$.
Therefore
\[\sum_{\ga\in\La}p_![\ICV_{\fM_\ga}]
=\Exp\rbigg{\frac{\sum_{\ga\in\Lap}\DT_\ga}{\Gv}}.\]
\end{remark}

\subsection{DT invariants and framed moduli spaces}
In this section we will express DT invariants using moduli spaces of stable framed objects.
This expression is a version of a wall-crossing formula, also called a DT/PT correspondence.
Recall that we have a commutative diagram
\[\begin{tikzcd}
\Mf\ar[rr,"q"]\ar[rd,"\pi"']&& \MM\ar[dl,"p"]\\
&\M
\end{tikzcd}\]

\begin{theorem}
\label{th:dt def2}
We have
\begin{equation}\label{DT2}
\pi_!\rbigg{\sum_{\ga\in\La}(-1)^{\vi(\ga)}[\ICV_{\Mf_\ga}]}
=\Exp\rbigg{\sum_{\ga\in\Lap}(-1)^{\vi(\ga)}[\bP^{\vi(\ga)-1}]_\vir\DT_\ga}.
\end{equation}
\end{theorem}

\begin{proof}
We will prove a similar formula in $\what\K(\Sch\qt\M)$
and then apply the map $\hi_c:\what\K(\Sch\qt\M)\to \what\K(\MHM(\M))$.
The wall-crossing formula for framed objects
(\cf \cite{engel_smooth,mozgovoy_wall-crossing,bridgeland_hall}) implies
\[\sum_{\ga}\bL^{\oh\hi(\ga,\ga)+\vi(\ga)}[\fM_\ga\to\M_\ga]
=
{\sum_{\ga}\bL^{\oh\hi(\ga,\ga)}}[\Mf_\ga\to\M_\ga]\cdot
\sum_{\ga}\bL^{\oh\hi(\ga,\ga)}[\fM_\ga\to\M_\ga].
\]
Applying \eqref{mot DT}, we obtain
\[
\Exp\rbr{\frac{\sum_{\ga}\bL^{\vi(\ga)}\DTm_\ga}{\Gv}}
={\sum_\ga \bL^{\oh\hi(\ga,\ga)}[\Mf_\ga\to\M_\ga]}\cdot
\Exp\rbr{\frac{\sum_{\ga}\DTm_\ga\cdot}{\Gv}}.
\]
Let $d_\ga=\dim \Mf_\ga=-\hi(\ga,\ga)+\vi(\ga)$
and $z_\ga=\bL^{-d_\ga/2}[\Mf_\ga\to\M_\ga]$.
Then
\[\sum_\ga \bL^{\oh\vi(\ga)}z_\ga
=\Exp\rbigg{\frac{\sum_{\ga}(\bL^{\vi(\ga)}-1)\DTm_\ga}{\Gv}}
=\Exp\rbigg{\sum_{\ga}\bL^{\oh}[\bP^{\vi(\ga)-1}]\DTm_\ga}.
\]
We apply the pre-\la-ring morphism
$\sum_\ga x_\ga\mto\sum_\ga (-\bL^{1/2})^{-\vi(\ga)}x_\ga$
to both sides and obtain
\[\sum_\ga (-1)^{\vi(\ga)}z_\ga
=\Exp\rbigg{\sum_{\ga}(-1)^{\vi(\ga)}[\bP^{\vi(\ga)-1}]_\vir\DTm_\ga}.
\]
Finally, we apply $\hi_c$ to both sides
and note that $\hi_c(z_\ga)=[\pi_!(\bQ_{\Mf_\ga}\ang {d_\ga})]
=\pi_![\ICV_{\Mf_\ga}]$.
\end{proof}

\subsection{DT invariants and intersection complexes}
For $\ga=(r,d)\in\La$, the stable locus $\Ms_\ga\sbe\M_\ga$ is open, smooth and has dimension
$1-\hi(\ga,\ga)$ (if $\Ms_\ga\ne\es$).
We consider the object $\ICV_{\ubar{\Ms_\ga}}\in\MHM(\M_\ga)$ which is defined to be zero if $\Ms_\ga=\es$.

\begin{theorem}
\label{main:proof}
We have $\DT_\ga=[\ICV_{\ubar{\Ms_\ga}}]$ for $\ga\in\La$.
\end{theorem}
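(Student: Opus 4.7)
The plan is to prove $\DT_\ga=\ICV_{M_\ga}$ by induction on the rank $r$ of $\ga=(r,d)$, combining the characterization~(\ref{DT2}), the virtual smallness of \pi from Theorem~\ref{framed-semism}, the fibre analysis of Theorem~\ref{th:fiber}, and the integrality statement recorded above. Tensoring by a sufficiently ample line bundle yields an isomorphism $M(r,d)\to M(r,d+rk)$ compatible with all structures in sight, so I may restrict throughout to slopes $\ta=\mu(\ga)>2g-2$, where the framed constructions apply.

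The base case $r=1$ is immediate: $M_\ga=\mathrm{Pic}^d(X)$ is smooth, $\pi:M_{f,\ga}\to M_\ga$ is a $\bP^{\hi(\ga)-1}$-bundle with fibre $\bP\Ga(X,L)$, and a non-trivial decomposition $\ga=\sum n_i\ga_i$ with all $r(\ga_i)\ge1$ is impossible, so no wall-crossing correction appears in~(\ref{DT2}). Theorem~\ref{Pn-fibration} then gives $\pi_*\ICV_{M_{f,\ga}}=[\bP^{\hi(\ga)-1}]_\vir\ICV_{M_\ga}$, and~(\ref{DT2}) forces $\DT_\ga=\ICV_{M_\ga}$.

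For the inductive step, assume $\DT_{\ga'}=\ICV_{M_{\ga'}}$ whenever $\mu(\ga')=\ta$ and $r(\ga')<r$. The $\ga$-component of $\Log$ of the left side of~(\ref{DT2}) equals $(-1)^{\hi(\ga)}[\bP^{\hi(\ga)-1}]_\vir\DT_\ga$, while substituting the inductive hypothesis on the right gives $(-1)^{\hi(\ga)}\pi_*\ICV_{M_{f,\ga}}$ plus correction terms built as $\si_n$-powers of the classes $\ICV_{M_{\ga'}}$ and pushed to $M_\ga$ via the finite direct-sum map. These corrections are pure self-dual of weight zero (the \la-structure of Section~\ref{sec:monoids} preserves such purity under $\si_n$, and finite pushforward preserves it as well), and their supports lie in $M_\ga\ms M^s_\ga$, since any bundle in the image of a non-trivial sum map is strictly polystable; in particular their degrees are at most $\hi(\ga)-1$. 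Combined with Corollary~\ref{vsmall degree}, which makes $\pi_*\ICV_{M_{f,\ga}}$ pure self-dual of weight zero with degree at most $d_0=\hi(\ga)-1$, this bounds the degree of $[\bP^{\hi(\ga)-1}]_\vir\DT_\ga$ by $\hi(\ga)-1$; dividing by $[\bP^{\hi(\ga)-1}]_\vir$ (which has degree $\hi(\ga)-1$) and invoking the integrality theorem forces $\deg\DT_\ga=0$, so $\DT_\ga$ is itself pure self-dual of weight zero.

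The hard part will be the final identification of $\DT_\ga$ with $\ICV_{M_\ga}$ as Hodge modules, not just as degree-$0$ elements of $\K(\MHM(M_\ga))$. Restricting~(\ref{DT2}) to the stable locus $M^s_\ga$ — where the corrections vanish and $\pi$ restricts to the $\bP^{\hi(\ga)-1}$-bundle of Theorem~\ref{th:fiber} with $(\ga,1)$ — Theorem~\ref{Pn-fibration} yields $\DT_\ga|_{M^s_\ga}=\ICV_{M^s_\ga}$, so $\DT_\ga-\ICV_{M_\ga}$ is a pure weight-zero, self-dual object supported on the closed union of non-stable Luna strata. To force its vanishing I would examine each stratum $S_{\ga,m}$ separately: Theorem~\ref{th:fiber} identifies the fibre of \pi over a polystable bundle of type $(\ga,m)$ with the moduli space of stable framed nilpotent representations of the symmetric quiver $Q'_\ga$, and \'etale-locally along $S_{\ga,m}$ the complex $\pi_*\ICV_{M_{f,\ga}}$ is computed from this quiver data, while the wall-crossing corrections on $S_{\ga,m}$ reduce to their quiver analogues. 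Equation~(\ref{DT2}) thus pulls back \'etale-locally to the analogous equation for $Q_\ga$, whose resolution is supplied by \cite{meinhardt_donaldson}; a stratum-by-stratum comparison of the decomposition-theorem summands of $\pi_*\ICV_{M_{f,\ga}}$ with the wall-crossing corrections cancels the difference and yields $\DT_\ga=\ICV_{M_\ga}$.
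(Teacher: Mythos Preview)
Your inductive framework, the reduction to slopes $\ta>2g-2$, the base case, and the invocation of Corollary~\ref{vsmall degree} and Theorem~\ref{Pn-fibration} over $M^s_\ga$ all match the paper. The gap is in your last paragraph: the stratum-by-stratum comparison you sketch is both incomplete (the \'etale-local reduction to the quiver result of \cite{meinhardt_donaldson} is a heuristic, not an argument) and unnecessary. The paper finishes with a pure degree comparison, and the step you are missing is that two of your bounds can be sharpened from $\le\hi(\ga)-1$ to \emph{strictly} $<\hi(\ga)-1$.

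First, the correction terms. Expanding the right side of \eqref{DT2} in \ga-degree via the Cauchy identity gives summands of the form
\[
\prod_\al S_{\la_\al}\bigl((-1)^{\hi(\al)}[\bP^{\hi(\al)-1}]_\vir\bigr)\cdot S_{\la_\al}(\DT_\al),
\]
indexed by $\la:\Ga_+\to\cP$ with $\sum_\al\n{\la_\al}\al=\ga$. For $\la\ne\de_\ga$ every $\al$ occurring has $r(\al)<r$, so by induction $\DT_\al=\ICV_{M_\al}$, and Schur functors of virtual intersection complexes (direct summands of external products followed by finite pushforward) are again virtual intersection complexes, hence of degree~$0$. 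The scalar factor then has degree at most
\[
\sum_\al\n{\la_\al}\bigl(\hi(\al)-1\bigr)=\hi(\ga)-\sum_\al\n{\la_\al}\le\hi(\ga)-2<\hi(\ga)-1.
\]
Second, the remainder in the decomposition theorem. The \emph{second} assertion of Corollary~\ref{vsmall degree} (which you do not use) says that the restriction of $\pi_*\ICV_{M_{f,\ga}}$ to each non-dense Luna stratum has degree strictly $<d_0=\hi(\ga)-1$. Combined with Theorem~\ref{Pn-fibration} over $M^s_\ga$ this gives
\[
\pi_*\ICV_{M_{f,\ga}}=[\bP^{\hi(\ga)-1}]_\vir\,\ICV_{M_\ga}+R,\qquad \deg R<\hi(\ga)-1.
\]
Now compare the parts of degree $\ge\hi(\ga)-1$ on the two sides of \eqref{DT2}: only the $\la=\de_\ga$ summand survives on the right and only $[\bP^{\hi(\ga)-1}]_\vir\,\ICV_{M_\ga}$ survives on the left, so
\[
[\bP^{\hi(\ga)-1}]_\vir\,\DT_\ga=[\bP^{\hi(\ga)-1}]_\vir\,\ICV_{M_\ga}.
\]
Since multiplication by $[\bP^{n}]_\vir$ raises the degree of any nonzero class in $K_0(\MHM)[\bL^{\oh}]$ by exactly $n$, the integrality theorem forces $\DT_\ga-\ICV_{M_\ga}=0$. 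No further fibre or stratum analysis is needed.
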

\begin{proof}
The formula \eqref{DT2} can be written in the form
\begin{equation}\label{DT3}
(-1)^{\vi(\ga)}\pi_*[\ICV_{\Mf_\ga}]
=\sum_{\ov{m:\Lap\to\bN}{\sum_\al m_\al\al=\ga}}
\prod_{\al\in\Lap} \si^{m_\al}\rbr{(-1)^{\vi(\al)}[\bP^{\vi(\al)-1}]_\vir\DT_{\al}}.
\end{equation}
We will compare the highest degree terms on both sides.
By Theorem \ref{framed-semism},
the map $\pi:\Mf_\ga\to \M_\ga$ is $n$-small, where $n=\vi(\ga)-1$.
It is a $\bP^n$-fibration over $\Ms_\ga$.
By Theorem \ref{th:LT}, the object $\pi_*\ICV_{\Mf_\ga}$ has degree $\le n$ and the degree $n$ component
$\ICV_{\ubar {\Ms_\ga}}\ang{-n}
=\bL^{n/2}\ICV_{\ubar {\Ms_\ga}}$.

Let us consider the summand on the right corresponding to $m:\Lap\to\bN$ different from the delta-function $\de_\ga:\Lap\to\bN$.
By induction on $\ga$, the DT invariants in this summand
satisfy $\DT_\al=[\ICV_{\ubar{\Ms_\al}}]$, hence have degree zero (if $\DT_\al\ne0$).
Therefore
$z_\al=(-1)^{\vi(\al)}[\bP^{\vi(\al)-1}]_\vir\DT_{\al}$ has degree
$\le \vi(\al)-1$.
As $\K(\MHM(\M))$ is a weight graded  \la-ring
(see Theorem \ref{th:la-MHM}),
we conclude that $\si^{m_\al}(z_\al)$ has degree $\le m_\al(\vi(\al)-1)$.
Therefore the summand $\prod_\al \si^{m_\al}(z_\al)$
has degree $\le\sum_\al m_\al(\vi(\al)-1)
=\vi(\ga)-\sum_\al m_\al<\vi(\ga)-1=n$.

The summand on the right corresponding to the delta-function $\de_\ga:\Lap\to\bN$ is equal to $z_\ga=(-1)^{\vi(\ga)}[\bP^{\vi(\ga)-1}]_\vir\DT_\ga$.
By Corollary \ref{cor:int}, we have $\DT_\ga\in K(\MHM(\M_\ga))[\bL^{\oh}]$,
hence we can write $\DT_\ga=\sum_{i\in\bZ}x_i$, where $x_i$ is homogeneous of degree $i$.
We can prove by induction that the class $\DT_\ga$ is self-dual, hence $\DT_\ga$ has degree $r\ge0$ (if $\DT_\ga\ne 0$).
Then $z_\ga$ has degree $r+n$ and the leading term $(-1)^{\vi(\ga)}\bL^{n/2}x_r$ (if $\DT_\ga\ne 0$).

We conclude that if $\DT_\ga=0$, then the right hand side has degree $<n$, hence $\ICV_{\ubar {\Ms_\ga}}=0$ and $\Ms_\ga=\es$.
If $\DT_\ga\ne0$, then the right hand side has degree $r+n$ and the leading term $(-1)^{\vi(\ga)}\bL^{n/2}x_r$.
Comparing it to the left hand side, we conclude that $r=0$ and
$x_0=[\ICV_{\ub{\Ms_\ga}}]$.
As $\DT_\ga$ is self-dual and $x_i=0$ for $i>0$,
we conclude that
$\DT_\ga=x_0=[\ICV_{\ub{\Ms_\ga}}]$.
\end{proof}

The above result translates verbatim to moduli spaces in other hereditary categories under the assumption that the restriction of the Euler form to $\cA^\mu$ is symmetric
(so that we can apply Theorem~\ref{framed-semism}).

\begin{corollary}[\cf Theorem \ref{main2}]
\label{cor-main2}
We have
\begin{gather}
\DT_\ga
=\begin{cases}
[\ICV_{\M_\ga}]& \Ms_\ga\ne\es,\\
0&\Ms_\ga=\es.
\end{cases}\label{DT-IC}\\
\aDT_\ga
=\begin{cases}
[H^*(\M_\ga,\ICV_{\M_\ga})]& \Ms_\ga\ne\es,\\
0&\Ms_\ga=\es.
\end{cases}\label{aDT-IC}
\end{gather}
\end{corollary}
\begin{proof}
The moduli space $\M_\ga$ is irreducible and $\Ms_\ga\sbe\M_\ga$ is open.
If $\Ms_\ga\ne\es$, then
$\ub{\Ms_\ga}=\M_\ga$, hence
$\DT_\ga=[\ICV_{\ub{\Ms_\ga}}]=[\ICV_{\M_\ga}]$
and we obtain \eqref{DT-IC}.
To prove \eqref{aDT-IC},
we apply $a_!:\K(\MHM(\M_\ga))\to\K(\MHM(\pt))$ for the projection $a:\M_\ga\to\pt$.
\end{proof}

\begin{example}
If $C$ is an elliptic curve, then $\M_\ga\iso S^k(C)$ for $k=\gcd(r,d)$, $\ga=(r,d)$.
Moreover, $\Ms_\ga=\M_\ga$ if $k=1$ and $\Ms_\ga=\es$ otherwise.
Therefore
\[\DT_{\ga}
=\begin{cases}
[\ICV_{\M_\ga}]=\bL^{-1/2}[\bQ_{\M_\ga}]&\gcd(r,d)=1,\\
0&\text{otherwise}.
\end{cases}
\]
\end{example}

\subsection{Hodge-Deligne polynomials}
\label{sec:HD}
Given a mixed Hodge structure $V$, we define its Hodge-Deligne polynomial (also called Hodge-Euler polynomial or $E$-polynomial)
\begin{equation}
E(V;u,v)=\sum_{p,q}h^{p,q}(V)u^pv^q,\qquad
h^{p,q}(V)=\dim\Gr_F^p\Gr^W_{p+q}(V_\bC).
\end{equation}
It induces a \la-ring homomorphism
\begin{equation}
E:K(\MHM(\pt))=K(\MHS^p(\bQ))\to\bZ[u^{\pm1},v^{\pm1}],
\end{equation}
where the \la-ring structure on the right is given by Adams operations
$\psi^n(f(u,v))=f(u^n,v^n)$.
The fact that the above map preserves \la-ring structures follows from the fact that it is induced by an exact monoidal functor
$\MHS^p(\bQ)\to\Vecf_\bC^{\bZ^2}$, $V\mto V_\bC$
(where $\Vecf_\bC^{\bZ^2}$ denotes the category of $\bZ^2$-graded objects in $\Vecf_\bC$) and that
$K(\Vecf_\bC^{\bZ^2})\iso\bZ[u^{\pm1},v^{\pm1}]$.
For an algebraic variety $X$,
we have the class
$\hi_c[X]=[H_c^*(X,\bQ)]=\sum_i(-1)^i[H^i_c(X,\bQ)]\in K(\MHM(\pt))$ and we define
\begin{equation}
E(X)=E(\hi_c[X])
=\sum_i(-1)^i E(H^i_c(X,\bQ))
\in\bZ[u^{\pm1},v^{\pm1}].
\end{equation}
In particular, $\bL=H^*_c(\bA^1,\bQ)=\bQ(-1)[-2]$ and
$E(\bL)=E(\bQ(-1))=uv$.
We extend $E$ to a \la-ring homomorphism
\[E:\K(\MHM(\pt))\to \bQ(u^{1/2},v^{1/2})\]
with $E(\bL^{1/2})=-(uv)^{1/2}$
(the minus sign corresponds to the fact that
$\bL^{1/2}=\bQ(-1/2)[-1]$ has odd homological degree).
In what follows we will denote $E(\bL^{1/2})$ by $\bL^{1/2}$.
For a finite type Artin stack $X$ over $\bC$ with affine stabilizers, we have the classes
$[X]\in\K(\Sch\qt\pt)$, $\hi_c[X]\in\K(\MHM(\pt))$
and we define $E(X)=E(\hi_c[X])$.

For $\mu\in\bQ$, we define the series (\cf \eqref{ser Q1})
\begin{equation}
Q_\mu=1+\sum_{d/r=\mu}Q_{r,d}t^r
=1+\sum_{d/r=\mu}\bL^{(1-g)r^2/2}E(\MM_{r,d})t^r
\in\bQ(u^\ohh,v^\ohh)\pser t
\end{equation}
Note that $\dim\MM_\ga=-\hi(\ga,\ga)=(g-1)r^2$ for $\ga=(r,d)$.
As in the introduction, we define the DT invariants of the curve by the formula \eqref{int:DT}
\begin{equation}\label{Q2}
Q_\mu=\Exp\rbigg{\frac{\sum_{d/r=\mu}\HDT_{r,d}t^r}{\Gv}}.
\end{equation}

\begin{theorem}[\cf Theorem \ref{main1}]
\label{main1-proof}
If $\Ms_\ga=\es$, then $\HDT_\ga=0$.
If $\Ms_\ga\ne\es$, then
\begin{gather}
\HDT_\ga=E(H^*(\M_\ga,\ICV_{\M_\ga}))
=\bL^{-\dim \M_\ga/2}E(\IH^*(\M_\ga,\bQ))\\
\HDT_{\ga}(y,y)
=(-y)^{-\dim \M_\ga}\sum\nolimits_k\dim\IH^k(\M_\ga,\bQ)(-y)^k,
\end{gather}
where $\dim\M_\ga=\dim\Ms_\ga=(g-1)r^2+1$ for $\ga=(r,d)\in\bZ^2$.
\end{theorem}
\begin{proof}
Comparing \eqref{Q2} and \eqref{aDT2}, we conclude that
$\HDT_\ga=E(\aDT_\ga)$.
Taking the $E$-polynomials on both sides of \eqref{aDT-IC}, we obtain
$\HDT_\ga
=\begin{cases}
E(H^*(\M_\ga,\ICV_{\M_\ga}))
& \Ms_\ga\ne\es,\\
0&\Ms_\ga=\es.
\end{cases}$

We have (\cf \eqref{i-coh2})
\[\IH^*(\M_\ga,\bQ)=H^*(\M_\ga,\ICV_{\M_\ga}\ang{-\dim \M_\ga})
=\bL^{\dim{\M_\ga}/2}H^*(\M_\ga,\ICV_{\M_\ga}),\]
hence $E(H^*(\M_\ga,\ICV_{\M_\ga}))=\bL^{-\dim\M_\ga/2}E(\IH^*(\M_\ga,\bQ))$.
The second formula follows by substitution $u=v=y$, $\bL^{1/2}=-y$ and the fact
that $\IH^k(\M_\ga,\bQ)$ is pure of weight $k$ (note that $\ang{1}$ doesn't change the weight).
\end{proof}

If $g\ge2$, then $\Ms_\ga\ne\es$, hence
$\dim \M_\ga=(g-1)r^2+1$ for $\ga=(r,d)$.
Therefore
\begin{equation}
\sum_{d/r=\mu}\bL^{(1-g)r^2/2}
E(\IH^*(\M_{r,d}))t^r
=\sum_{d/r=\mu}\bL^{1/2} \HDT_{r,d}t^r
=(\bL-1)\Log(Q_\mu).
\end{equation}
The next result is an explicit formula for $Q_{r,d}$ mentioned in the introduction, see
\cite{zagier_elementary,laumon_langlands,mozgovoy_moduli}.

\def\fl#1{\lfloor#1\rfloor}
\begin{theorem}\label{zagier}
For any $r,d$, we have
$$
Q_{r,d}=
\sum_{\ov{r_1,\dots,r_k>0}{r_1+\dots+r_k=r}}
\prod_{i=1}^{k-1}
\frac{\bL^{(r_i+r_{i+1})\set{(r_1+\dots+r_i)d/r}}}
{1-\bL^{r_i+r_{i+1}}}Q_{r_1}\dots Q_{r_k},
$$
where $\set x=x-\lfloor x\rfloor$ is the fractional part of $x$ and
$$Q_r=\bL^{(1-g)r^2/2}
\Res_{t=1}\prod_{i=0}^{r-1}Z_C(\bL^i t),
\qquad Z_C(t)
=\sum_{n\ge0}E(S^nC)t^n=\frac{(1-ut)^g(1-vt)^g}{(1-t)(1-uvt)}.$$
\end{theorem}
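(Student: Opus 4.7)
The plan is to derive this explicit formula by solving the classical Harder--Narasimhan recursion for $Q_{r,d}$, following the route of Harder--Narasimhan, Atiyah--Bott, Laumon--Rapoport, and Zagier. Three ingredients are needed: (i) the Siegel formula identifying the total stack class in closed form; (ii) the HN stratification, which expresses the total class as a sum of semistable pieces; and (iii) Zagier's elementary combinatorial identity, which inverts the resulting upper-triangular system.

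First, for each rank $r$ I would introduce the \emph{unrestricted} class $Q_{r,d}^{\mathrm{all}} := \bL^{(1-g)r^2/2}\,E(R_{r,d}^{\mathrm{all}})/E(G_{r,d})$, where $R_{r,d}^{\mathrm{all}}$ parametrises \emph{all} rank $r$, degree $d$ bundles. The Hodge-polynomial incarnation of the Siegel formula for $\GL_r$ over the function field of $X$ identifies $\sum_d Q_{r,d}^{\mathrm{all}}$ with the explicit product $Q_r$ in the statement. This can be extracted from Behrend--Dhillon's motivic class of $\operatorname{Bun}_r$ (or directly from the Atiyah--Bott gauge-theoretic computation), after rewriting the zeta factors via $Z_X(t) = P_X(t)/[(1-t)(1-\bL t)]$ and recognising $P_X(1) = (1-u)^g(1-v)^g$ as the ``mass'' at $t=1$.

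Next, apply the HN stratification: every rank $r$ bundle has a unique filtration with semistable subquotients of strictly decreasing slopes. Tracking the $\bL$-twist coming from the $\Ext^1$-groups that split the filtration gives, for each fixed $(r,d)$,
$$Q_{r,d}^{\mathrm{all}} = \sum_{\substack{r_1+\dots+r_k=r,\ \sum d_i = d \\ d_1/r_1 > \dots > d_k/r_k}} \bL^{(g-1)\sum_{i<j}r_i r_j}\prod_i Q_{r_i,d_i}.$$
Isolating the $k=1$ summand expresses $Q_{r,d}$ in terms of the $Q_{r',d'}$ with $r' < r$ and the known $\sum_d Q_{r,d}^{\mathrm{all}} = Q_r$ from the first step.

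The last step is the combinatorial inversion of this upper-triangular system. Here I would invoke Zagier's elementary identity in the form recorded in \cite[\S2.1]{mozgovoy_moduli}: the strictly decreasing slope condition can be dropped at the cost of replacing each link in a composition by the factor $\bL^{(r_i+r_{i+1})\{(r_1+\dots+r_i)d/r\}}/(1-\bL^{r_i+r_{i+1}})$. The fractional-part exponent records how much the running partial slope $(d_1+\dots+d_i)/(r_1+\dots+r_i)$ deviates from the total slope $\ta = d/r$, and the geometric-series denominator absorbs the free choice of partial degrees compatible with a given composition. Combining this inversion with the closed form for $Q_r$ yields the stated formula. The main technical obstacle is the $\bL$-power bookkeeping that matches the exponents produced by the HN twist with those supplied by Zagier's identity; since this bookkeeping is performed in full in \cite{zagier_elementary,mozgovoy_moduli}, the result can be imported essentially verbatim.
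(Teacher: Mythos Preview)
Your proposal is correct and aligns with the paper's treatment: the paper does not prove Theorem~\ref{zagier} at all but simply records it as a known result, citing Zagier \cite{zagier_elementary} for the original inversion and \cite[\S2.1]{mozgovoy_moduli} for this particular form. Your sketch (Siegel formula for the unrestricted stack class, HN recursion, Zagier's combinatorial inversion) is exactly the content of those references, so there is nothing to compare.
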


\begin{remark}
With $r_{\le i}=r_1+\dots+r_i$,
we have
$\sum_{i=1}^{k-1}(r_i+r_{i+1}){r_{\le i}}=(r-r_k)r$,
hence
\begin{equation*}
\sum_{i=1}^{k-1}(r_i+r_{i+1})\set{r_{\le i}d/r}
=(r-r_k)d-\sum_{i=1}^{k-1}(r_i+r_{i+1})\fl{r_{\le i}d/r}\in\bZ.
\end{equation*}
\end{remark}


\appendix
\section{Relative hard Lefschetz theorem}
\label{RHL}
Let $\pi:X\to Y$ be a projective morphism between smooth (connected) algebraic varieties and $\ell\in H^2(X,\bZ(1))$ be the first Chern class
of a relatively ample line bundle on $X$.
It induces the map $\ell:M\to M(1)[2]$ for any $M\in D^b\MHM(X)$.
\begin{theorem}[Relative hard Lefschetz theorem \cite{BBD,saito_modules}]
For a pure $M\in\MHM(X)$, the map
$$\ell^i:H^{-i}\pi_*M\to H^i \pi_*M(i),\qquad i\ge0,$$
is an isomorphism.
\end{theorem}
We define the primitive parts of $\pi_*M$
\begin{equation}
P_i=\Ker(\ell^{i+1}:H^{-i}\pi_*M\to H^{i+2}\pi_*M(i+1)),\qquad i\ge0.
\end{equation}
Applying the isomorphism $\ell^i$, we obtain
\[P_i\iso \Ker(\ell:H^i\pi_*M\to H^{i+2}\pi_*M(1))(i).\]
By the above theorem, there are decompositions
\begin{gather}
M=\bop_{i\ge0}P_i[i]\ts H^*(\bP^i,\bQ),\\
H^kM=\bop_{\ov{0\le j\le i}{-i+2j=k}}P_i(-j)
=\bop_{j\ge0,k}P_{2j-k}(-j)
,\qquad k\in\bZ.
\end{gather}


\begin{lemma}
\label{lm:top prim}
Let $\pi:X\to Y$ be a smooth projective morphism with smooth $Y$ and connected fibres of dimension $n$.
Then the primitive parts of $\pi_*\ICV_X$
satisfy $P_n=H^{-n}\pi_*\ICV_X=\ICV_Y(n/2)$ and $P_i=0$ for $i>n$.
We also have $H^n\pi_*\ICV_X\iso\ICV_Y(-n/2)$.
\end{lemma}
\begin{proof}
The functor $\pi_*$ has amplitude $[-n,n]$, hence $H^{-i}\pi_*\ICV_X=H^i\pi_*\ICV_X=0$ for $i>n$.
Therefore $P_n=H^{-n}\pi_*\ICV_X$.
The map $\bQ_Y\to \pi_*\pi^*\bQ_Y=\pi_*\bQ_X$
induces the map $\ICV_Y\to \pi_*\ICV_X(-n/2)[-n]$,
hence $\ICV_Y\to H^{-n}\pi_*\ICV_X(-n/2)$.
To show that it is an isomorphism, we consider the corresponding map between perverse sheaves
$\IC_Y\to\p H^{-n}\pi_*\IC_X$.
As $\pi$ is smooth, the object $M=\pi_*\IC_X\in D^b_c(\bQ_Y)$ is smooth.
Therefore $\p H^{i}M=(\sH^{i-d_Y}M)[d_Y]$ for all $i\in\bZ$,
where $\sH^iM\in\Sh(\bQ_Y)$ denotes the $i$-th cohomology sheaf.
In particular,
$\p H^{-n}\pi_*\IC_X=(\sH^{-d_X}\pi_*\IC_X)[d_Y]=(\sH^0\pi_*\bQ_X)[d_Y]$.
The map $\IC_Y=\bQ_Y[d_Y]\to (\sH^{0}\pi_*\bQ_X)[d_Y]$
is an isomorphism as the fibres are connected.
\end{proof}


\begin{remark}
If $\pi:X\to Y$ is a $\bP^n$-fibration, then the primitive part $P_n$ of $\pi_*\ICV_X$ is the only nonzero component (we can check this on the fibres).
Therefore
\begin{equation}
\pi_*\ICV_X=\ICV_Y(n/2)[n]\ts H^*(\bP^n,\bQ)
=\ICV_Y\ts H^*(\bP^n,\bQ)\ang{n}.
\end{equation}
\end{remark}

\providecommand{\bysame}{\leavevmode\hbox to3em{\hrulefill}\thinspace}
\providecommand{\href}[2]{#2}

\end{document}